\setlist{font=\normalfont}
\numberwithin{equation}{section}
\theoremstyle{plain}
\newtheorem{theorem}{Theorem}[section]
\newtheorem{lemma}[theorem]{Lemma}
\newtheorem{corollary}[theorem]{Corollary}
\theoremstyle{definition}
\newtheorem{definition}[theorem]{Definition}
\newtheorem{remark}[theorem]{Remark}
\newtheorem{example}[theorem]{Example}
\newtheorem*{sdcthm}{Property~(SDC)}
\newcommand{\A}{\mathbb{A}}
\newcommand{\M}{\mathbb{M}}
\newcommand{\N}{\mathbb{N}}
\renewcommand{\L}{\mathbb{L}}
\newcommand{\B}{\mathbb{B}}
\newcommand{\E}{\mathcal{E}}
\DeclareMathOperator{\Sol}{Sol}
\DeclareMathOperator{\Eq}{Eq}
\renewcommand{\a}{\mathbf{a}}
\renewcommand{\O}{\mathcal{O}}
\newcommand{\OA}{\mathcal{O}_A}
\newcommand{\RA}{\mathcal{R}_A}
\newcommand{\Pol}{\operatorname{Pol}}
\newcommand{\Inv}{\operatorname{Inv}}
\newcommand{\rel}{\operatorname{rel}}
\DeclareMathOperator{\sdc}{(SDC)}
\begin{document}
\title[Solution sets of systems of equations over finite (semi)lattices]{Solution sets of systems of equations over finite lattices and semilattices}

\author[E. T\'{o}th]{Endre T\'{o}th}
\address[E. T\'{o}th]{Bolyai Institute, University of Szeged, Aradi v\'{e}rtan\'{u}k tere 1, H--6720 Szeged, Hungary}
\email{tothendre@math.u-szeged.hu}
\author[T. Waldhauser]{Tam\'as Waldhauser}
\address[T. Waldhauser]{Bolyai Institute, University of Szeged, Aradi v\'{e}rtan\'{u}k tere 1, H--6720 Szeged, Hungary}
\email{twaldha@math.u-szeged.hu}

\subjclass[2010]{08A40, 06A12, 06B99, 06D99, 06E99, 03C10}

\keywords{system of equations, solution set, clone, lattice, semilattice, distributive lattice, distributive semilattice, Boolean lattice, primitive positive formula, quantifier elimination}

\begin{abstract}
Solution sets of systems of homogeneous linear equations over fields are characterized as being subspaces, i.e., sets that are closed under linear combinations. Our goal is to characterize solution sets of systems of equations over arbitrary finite algebras by a similar closure condition. We show that solution sets are always closed under the centralizer of the clone of term operations of the given algebra; moreover, the centralizer is the only clone that could characterize solution sets. If every centralizer-closed set is the set of all solutions of a system of equations over a finite algebra, then we say that the algebra has Property~$\sdc$. Our main result is the description of finite lattices and semilattices with Property~$\sdc$: we prove that a finite lattice has Property~$\sdc$ if and only if it is a Boolean lattice, and a finite semilattice has Property~$\sdc$ if and only if it is distributive.
\end{abstract}

\maketitle

\section{Introduction}
In universal algebra, investigations of systems of equations usually focus on either finding a solution, the complexity of finding a solution or deciding if there is a solution at all. For us the main interest is the ``shape" of the solution sets, just like in the following basic result of linear algebra: solution sets of systems of
homogeneous linear equations in $n$ variables over a field $K$ are precisely the subspaces of the vector space $K^{n}$, i.e., sets of $n$-tuples that are
closed under linear combinations. Our goal is to give a similar characterization (i.e., a kind of closure condition) for solution sets of systems of equations over arbitrary finite algebras.

Let us fix a nonempty set $A$ and a set $F$ of operations on $A$; then we obtain the algebra $\A=(A, F)$. Any equation over $\A$ is of the form $f(x_1, \dots, x_n) = g(x_1, \dots, x_n)$, where $f$ and $g$ are $n$-ary term functions. We can also say that $f$ and $g$ are from the set $[F]$ of operations generated by $F$ by means of compositions. After this observation we can see that in every equation, the operations on both sides are from $C:=[F]$, which we will call the \emph{clone} generated by $F$ (Definition~\ref{clone}). We will investigate solution sets of systems of equations over finite algebras in this view. The algebraic sets studied by B.~I.~Plotkin in his universal algebraic geometry \cite{Plot} are essentially the same as our solution sets; the only difference being that we consider only finite systems of equations. Recently A.~Di~Nola, G.~Lenzi and G.~Vitale characterized the solution sets of certain systems of equations over lattice ordered abelian groups (see \cite{ANGLGV}).

 In our previous paper \cite{ETTW} we proved that for any system of equations over a clone $C$, the solution set is closed under the \emph{centralizer} of the clone $C$ (see Definition~\ref{centralizer}). We also proved that for clones of Boolean functions this condition is sufficient as well. We will say that a clone (or the associated algebra) has Property~$\sdc$ if closure under the centralizer characterizes the solution sets (here SDC stands for ``Solution sets are Definable by closure under the Centralizer"). Thus clones of Boolean functions (i.e., two-element algebras) always have Property~$\sdc$, and in \cite{ETTW} we gave an example of a three-element algebra that does not have Property~$\sdc$. In this paper we describe all finite lattices and semilattices with Property~$\sdc$.
In Section~\ref{prel} we present the necessary notations and definitions. In Section~\ref{quant} we give a connection between Property~$\sdc$ and quantifier elimination of certain primitive positive formulas. Also we show that for systems of equations over a clone $C$, if all solution sets can be described by closure under a clone $D$, then $D$ must be the centralizer of $C$. Section~\ref{latt} contains the full description of finite lattices with Property~$\sdc$: a finite lattice has Property~$\sdc$ if and only if it is a Boolean lattice. In Section~\ref{semil} finite semilattices having Property~$\sdc$ are described as semilattice reducts of distributive lattices.

\section{Preliminaries\label{prel}}

\subsection{Operations and clones} \label{soper}

Let $A$ be an arbitrary set with at least two elements. By an\textbf{\ }
\emph{operation} on $A$ we mean a map $f\colon A^{n}\rightarrow A$; the positive integer $n$ is called the \emph{arity} of the operation $f$. The set of all operations on $A$ is denoted by $\OA$. For a set $F\subseteq\OA$ of operations, by $F^{(n)}$ we mean the set of $n$-ary members of $F$. In particular, $\OA^{(n)}$ stands for the set of all $n$-ary operations on $A$. 

We will denote tuples by boldface letters, and we will use the corresponding plain letters with subscripts for the components of the tuples. For example,
if $\mathbf{a}\in A^{n}$, then $a_{i}$ denotes the $i$-th component of $\mathbf{a}$, i.e., $\mathbf{a}=(a_{1},\dots,a_{n})$. In particular, if
$f\in\OA^{(n)}$, then $f(\mathbf{a})$ is a short form for $f(a_{1},\dots,a_{n})$. If $\mathbf{t}^{(1)},\dots,\mathbf{t}^{(m)}\in A^{n}$ and $f\in\mathcal{O}_{A}^{(m)}$, then $f(\mathbf{t}^{(1)},\dots,\mathbf{t}^{(m)})$ denotes the $n$-tuple obtained by applying $f$ to the tuples $\mathbf{t}^{(1)},\dots,\mathbf{t}^{(m)}$ componentwise:
\[
f(\mathbf{t}^{(1)},\dots,\mathbf{t}^{(m)})=\bigl(f(t_{1}^{(1)},\dots
,t_{1}^{(m)}),\dots,f(t_{n}^{(1)},\dots,t_{n}^{(m)})\bigr).
\]
We say that $T\subseteq A^{n}$ is \emph{closed under $C$}, if for all
$m\in\mathbb{N},\mathbf{t}^{(1)},\dots,\mathbf{t}^{(m)}\in T$ and for all
$f\in C^{(m)}$ we have $f(\mathbf{t}^{(1)},\dots,\mathbf{t}^{(m)})\in T$.

Let $f\in\OA^{(n)}$ and $g_{1},\dots,g_{n}\in\OA^{(k)}$. By the \emph{composition} of $f$ by $g_{1},\dots,g_{n}$ we mean the
operation $h\in\OA^{(k)}$ defined by
\[
h(\mathbf{x})=f\bigl(g_{1}(\mathbf{x}),\dots,g_{n}(\mathbf{x})\bigr)\text{ for
all }\mathbf{x}\in A^{k}.
\]
Now we present the precise definition of  clones.
\begin{definition}\label{clone}
	If $C\subseteq\OA$ is closed under composition and contains the \emph{projections} $e_i^{(n)}\colon (x_{1},\dots,x_{n})\mapsto x_{i}$ for all $1\leq i\leq
n\in\mathbb{N}$, then $C$ is said to be a \emph{clone} (notation:\ $C\leq
\OA$).
\end{definition}

For an arbitrary set $F$ of operations on $A$, there is a least clone $[F]$ containing $F$, called the clone \emph{generated} by $F$. The elements of this clone are those operations that can be obtained from members of $F$ and from projections by finitely many compositions. In other words, $[F]$ is the set of term operations of the algebra $\A = (A,F)$.

The set of all clones on $A$ is a lattice under inclusion; the greatest element of this lattice is $\OA$, and the least element is the \emph{trivial clone} consisting of projections only. There are countably infinitely many clones on the two-element set; these have been described by Post \cite{Post}, hence the lattice of clones on $\{0,1\}$ is called the \emph{Post lattice}. If $A$ is a finite set with at least three elements, then the clone lattice on $A$ is of continuum cardinality \cite{JanMuc}, and it is a very difficult open problem to describe all clones on $A$ even for $\vert A\vert =3$.

\subsection{Centralizer clones}

We say that the operations $f\in\OA^{(n)}$ and $g\in
\OA^{(m)}$ \emph{commute }(notation: $f\perp g$) if
\begin{multline*}
f\bigl(g(a_{11},a_{12},\dotsc,a_{1m}),\dotsc,g(a_{n1},a_{n2},\dotsc
,a_{nm})\bigr)\\
=g\bigl(f(a_{11},a_{21},\dotsc,a_{n1}),\dotsc,f(a_{1m},a_{2m},\dotsc
,a_{nm})\bigr)
\end{multline*}
holds for all $a_{ij}\in A~(1\leq i\leq n,1\leq j\leq m)$. This can be visualized as follows: for every $n\times m$ matrix $Q=(a_{ij})$, first applying $g$ to the rows of $Q$ and then applying $f$ to the resulting column vector yields the same result as first applying $f$ to the columns of $Q$ and then applying $g$ to the resulting row vector (see Figure~\ref{comm}).

\begin{figure}
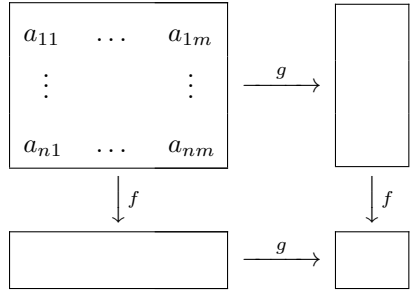

\[
\begin{CD} \renewcommand{\arraystretch}{1.6}\begin{tabular}{|m{0.6cm}m{0.6cm}m{0.6cm}|} \cline{1-3}\cline{3-3} $a_{11}$ & $\dots$ & $a_{1m}$ \\ \hfil$\vdots$ & & \hfil$\vdots$\\ $a_{n1}$ & $\dots$ & $a_{nm}$ \\ \cline{1-3}\cline{3-3} \end{tabular} @>g>> \renewcommand{\arraystretch}{1.66}\begin{tabular}{m{0.6cm}} \cline{1-1} \multicolumn{1}{|m{0.6cm}|}{} \\ \multicolumn{1}{|c|}{} \\ \multicolumn{1}{|c|}{} \\ \cline{1-1} \end{tabular} \\ @VVfV @VVfV\\ \renewcommand{\arraystretch}{1.66}\begin{tabular}{|m{0.6cm}m{0.6cm}m{0.6cm}|} \cline{1-3}\cline{3-3} & & \\ \cline{1-3}\cline{3-3} \end{tabular} @>g>> \renewcommand{\arraystretch}{1.66}\begin{tabular}{|m{0.6cm}|} \cline{1-1} \\ \cline{1-1} \end{tabular} \end{CD}
\]
\caption{Commutation of $f$ and $g$.} \label{comm}
\end{figure}

\begin{definition} \label{centralizer}
For any $F\subseteq\OA$, the set $F^{\ast}:=\{g\in\OA\mid f\perp g$ for all $f\in F\}$ is called the \emph{centralizer} of $F$.
\end{definition} 
It is easy to verify that if $f,g_{1},\dots,g_{n}$ all commute with an operation $h$, then the composition $f(g_{1},\dots,g_{n})$ also commutes with $h$. This implies that $F^\ast$ is a clone for all $F \subseteq \OA$ (even if $F$ itself is not a clone).

Clones arising in this form are called \emph{primitive positive clones}; such clones seem to be quite rare: there are only finitely many primitive positive clones over any finite set \cite{BurrisWillard}.

\begin{example}
\label{example linear centralizers}Let $K$ be a field, and let $L$ be the
clone of all operations over $K$ that are represented by a linear polynomial:
\[
L:=\{a_{1}x_{1}+\dots+a_{k}x_{k}+c \mid k\geq0,a_{1},\dots,a_{k},c\in K\}.
\]
Since $L$ is generated by the operations $x+y$, $ax~(a\in K)$ and the
constants $c\in K$, the centralizer $L^{\ast}$ consists of those operations
$f$ over $K$ that commute with $x+y$ and $ax$ (i.e., $f$ is additive and
homogeneous), and also commute with the constants (i.e., $f(c,\dots,c)=c$ for
all $c\in K$):
\[
L^{\ast}:=\{a_{1}x_{1}+\dots+a_{k}x_{k} \mid k\geq1,a_{1},\dots,a_{k}\in
K\text{ and }a_{1}+\dots+a_{k}=1\}.
\]
Similarly, one can verify that $L_{0}^{\ast}=L_{0}$ for the clone
\[
L_{0}:=\{a_{1}x_{1}+\dots+a_{k}x_{k} \mid k\geq0,a_{1},\dots,a_{k}\in K\}.
\]

\end{example}

\subsection{Equations and solution sets\label{subsect Eq and Sol}}

Let us fix a finite set $A$, a clone $C\leq\OA$ and a natural number $n$. By an
$n$-ary \emph{equation over $C$ }(\emph{$C$-equation} for short) we mean an
equation of the form $f(x_{1},\dots,x_{n})=g(x_{1},\dots,x_{n})$, where
$f,g\in C^{(n)}$. We will often simply write this equation as a pair $(f,g)$.
A \emph{system of $C$-equations} is a finite set of $C$-equations of the same
arity:
\[
\mathcal{E}:=\bigl\{(f_{1},g_{1}),\dots,(f_{t},g_{t})\bigr\}\text{, where
}f_{i},g_{i}\in C^{(n)}\text{ }(i=1,\dots,t).
\]
Note that we consider only systems consisting of a finite number of equations. This does not restrict generality, since we are dealing only with finite algebras.
We define the \emph{set of solutions of $\mathcal{E}$} as the set
\[
\operatorname{Sol}(\mathcal{E}):=\bigl\{\mathbf{a}\in A^{n} \mid 
f_{i}(\mathbf{a})=g_{i}(\mathbf{a})\text{ for }i=1,\dots,t\bigr\}.
\]
For $\mathbf{a}\in A^{n}$ we denote by $\operatorname{Eq}_{C}(\mathbf{a})$ the
set of $C$-equations satisfied by $\mathbf{a}$:
\[
\operatorname{Eq}_{C}(\mathbf{a}):=\bigl\{(f,g) \mid f,g\in C^{(n)}\text{
and }f(\mathbf{a})=g(\mathbf{a})\bigr\}.
\]
Let $T\subseteq A^{n}$ be an arbitrary set of tuples. We denote by
$\operatorname{Eq}_{C}(T)$ the set of $C$-equations satisfied by $T$:
\[
\operatorname{Eq}_{C}(T):=\bigcap_{\mathbf{a}\in T}\operatorname{Eq}_{C}(\mathbf{a}).
\]

\begin{remark}\label{soleqt}
For any given $n \in \N$ and $C\leq \OA$, the operators $\Sol$ and $\Eq_C$ give rise to a Galois connection between sets of $n$-tuples and systems of $n$-ary equations. In particular, if $T$ is the solution set of a system of equations (i.e., $T$ is Galois closed), then $T=\Sol(\Eq_C(T))$; moreover, $\E=\Eq_C(T)$ is the largest system of equations with $T=\Sol(\E)$.
\end{remark}

\begin{example}
\label{example linear equations}Considering the \textquotedblleft
linear\textquotedblright\ clones of Example~\ref{example linear centralizers},
$L$-equations are linear equations and $L_{0}$-equations are homogeneous
linear equations.
\end{example}

In a previous paper \cite{ETTW} we proved that for any clone, the solution sets are closed under the centralizer of the clone. Furthermore, we proved the following theorem, which characterizes solution sets of systems of equations over clones of Boolean functions.

\begin{theorem}[\cite{ETTW}]\label{fotetel}
	For any clone of Boolean functions $C\leq \mathcal{O}_{\{0,1\}}$ and $T\subseteq \{0,1\}^n$, the following conditions are equivalent:
	\begin{enumerate}[label=(\alph*)]
		\item there is a system $\E$ of $C$-equations such that $T=\Sol(\E)$;
		\item $T$ is closed under $C^\ast$.
	\end{enumerate}
\end{theorem}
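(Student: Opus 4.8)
The plan is to prove the equivalence in two directions, the easy one being already supplied by the earlier work in \cite{ETTW}. First, the implication $(a)\Rightarrow(b)$ is exactly the general fact recalled above: every solution set of a system of $C$-equations is closed under the centralizer $C^\ast$, and this holds over an arbitrary (finite) algebra, so nothing Boolean-specific is needed here. The entire content of the theorem is therefore the reverse implication $(b)\Rightarrow(a)$: given $T\subseteq\{0,1\}^n$ that is closed under $C^\ast$, we must exhibit a finite system $\E$ of $C$-equations with $\Sol(\E)=T$. By Remark~\ref{soleqt} the only candidate is $\E=\Eq_C(T)$, so the task reduces to showing $\Sol(\Eq_C(T))\subseteq T$ whenever $T$ is $C^\ast$-closed (the inclusion $T\subseteq\Sol(\Eq_C(T))$ being automatic, and finiteness of a generating subsystem being free since $C^{(n)}$ is finite).

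The approach I would take exploits Post's classification: there are only countably many clones $C\leq\O_{\{0,1\}}$, and one can attempt a case analysis along the Post lattice, but a cleaner route is to argue directly about $C^\ast$-closed sets. Fix a $C^\ast$-closed set $T\subseteq\{0,1\}^n$ and a tuple $\a\notin T$; the goal is to produce a single $C$-equation $(f,g)$ with $\a\notin\Sol(\{(f,g)\})$ but $T\subseteq\Sol(\{(f,g)\})$, i.e.\ $f|_T=g|_T$ as functions on $T$ while $f(\a)\neq g(\a)$. Thinking of the columns of the $n\times|T|$ matrix whose rows are the elements of $T$, an $n$-ary term operation $f\in C^{(n)}$ evaluated on all of $T$ is determined by applying $f$ columnwise; so $f|_T=g|_T$ says precisely that $f$ and $g$ agree on the set $S\subseteq\{0,1\}^{|T|}$ of these columns. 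The separation of $\a$ from $T$ must then be converted into a statement about which Boolean functions agree on $S$, and here the hypothesis that $T$ is closed under $C^\ast$ is what pins down $S$ enough to guarantee that some pair $f,g\in C^{(n)}$ does the job. Concretely, I expect to use the Galois theory $\Pol$--$\Inv$ together with the duality between $C$ and $C^\ast$: closure of $T$ under $C^\ast$ means $T\in\Inv(C^\ast)$, and one wants to translate membership in $\Sol(\Eq_C(T))$ into a relational-clone condition on the columns of $T$, then invoke that on $\{0,1\}$ the relevant Galois closure is tight.

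The main obstacle, I anticipate, is precisely this last translation: showing that for Boolean functions the pair of operators $(\Sol,\Eq_C)$ is ``as closed as possible,'' equivalently that the relational clone generated by the columns of a $C^\ast$-closed set $T$ contains the graph-type relation witnessing $\a\notin T$. For a general finite algebra this fails (the excerpt mentions a three-element counterexample), so the proof must genuinely use special features of the two-element case --- most plausibly the self-duality/structure of the Post lattice and the fact that every clone on $\{0,1\}$ is finitely related, which makes the columns of $T$ controllable by finitely many coordinates. I would therefore organize the argument as: (1) reduce to separating a single $\a\notin T$ by a single $C$-equation; (2) reformulate ``$f|_T=g|_T$'' in terms of agreement on the column set $S$; (3) use $T\in\Inv(C^\ast)$ to constrain $S$; (4) run the case analysis over Post's lattice (or a uniform Galois-closure argument) to produce the required $f,g$. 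Step (4) is where the real work lies, and likely where the Boolean hypothesis is indispensable.
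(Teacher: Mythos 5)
Your reduction of the problem is sound as far as it goes: the implication (a)$\Rightarrow$(b) is indeed the general fact (Theorem~\ref{ttetel}), by Remark~\ref{soleqt} the only candidate system is $\E=\Eq_C(T)$, finiteness is free because $C^{(n)}$ is finite, and separating each $\a\notin T$ by a single $C$-equation agreeing on $T$ is the right target. But from that point on your text is a plan, not a proof: the entire content of the theorem is precisely your step (4) --- producing, from the hypothesis $T\in\Inv(C^\ast)$, a pair $f,g\in C^{(n)}$ with $f|_T=g|_T$ and $f(\a)\neq g(\a)$ --- and you explicitly leave it unexecuted (``Step (4) is where the real work lies''). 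The heuristics you offer in its place do not close the gap: the column-set reformulation and the appeal to $\Pol$--$\Inv$ are available over any finite set, and the fact that every Boolean clone is finitely related has no analogue of the needed strength, since the three-element counterexample mentioned in the paper shows that $C^\ast$-closedness together with such generic Galois-theoretic facts cannot imply (a). So the Boolean-specific argument, which is the theorem, is missing.

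For comparison: the present paper does not reprove this statement at all --- it is imported from \cite{ETTW} --- and the proof there does carry out essentially what you defer, namely a concrete analysis of the two-element case (working through Post's lattice, i.e.\ the finitely many Boolean clones and their centralizers, and verifying the closure/quantifier-elimination property for each family). If you want to complete your proposal, you would have to either perform that case analysis yourself or find a uniform argument exploiting genuinely two-element features (e.g.\ the explicit description of the finitely many centralizer clones on $\{0,1\}$); as written, nothing in your steps (3)--(4) does this.
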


\noindent Thus for two-element algebras, closure under the centralizer characterizes solution sets. We will say that a clone $C$ has Property~$\sdc$, if this is true for the clone:

\begin{sdcthm} \label{cond}
	The following are equivalent for all $n \in \N$ and $T \subseteq A^n$:
	\begin{enumerate}[label=(\alph*)]
		\item there exists a system $\E$ of $C$-equations such that $T=\Sol(\E)$;
		\item the set $T$ is closed under $C^\ast$.
	\end{enumerate}
\end{sdcthm}
\noindent Here SDC is an abbreviation for ``Solution sets are Definable by closure under the Centralizer''.
In \cite{ETTW} we presented a clone on a three-element set that does not have Property~$\sdc$, showing that in general this is not a trivial property.

\subsection{The Pol-Inv Galois connection}

	For a positive integer $h$, a set $\rho\subseteq A^{h}$ is called an \emph{$h$-ary relation} on $A$; let $\RA$ denote the set of all relations on $A$. For any $R \subseteq \RA$, let $R^{(h)}$ denote the \emph{$h$-ary part of $R$}, i.e., the set of $h$-ary members of $R$.
	
	For a relation $\rho \subseteq A^h$ and operation $f \in \OA^{(n)}$, if for arbitrary tuples $\a^{(1)}, \dots, \a^{(n)} \in \rho$ we have $f(\a^{(1)}, \dots, \a^{(n)}) \in \rho$, then we say that $f$ is a \emph{polymorphism} of $\rho$, or $\rho$ is an \emph{invariant relation} of $f$ (or we also say that $f$ \emph{preserves} $\rho$). We will denote this as $f\vartriangleright\rho$. Note that $f \vartriangleright \rho$ is equivalent to $\rho$ being closed under $f$ (see Subsection~\ref{soper}). Preservation induces the so-called \emph{$\Pol$-$\Inv$ Galois connection}. For any $F \subseteq\OA$ and for any $R \subseteq\mathcal{R}_A$, let
	\begin{align*}
		\Inv{(F)} & :=\{ \rho \in\RA\mid\forall f\in F\colon f\vartriangleright\rho\}, \text{ and} \\
		\Pol{(R)} & :=\{ f \in \OA\mid\forall\rho\in R\colon f\vartriangleright\rho\}.
	\end{align*}

It is easy to verify that $\Pol{(R)}$ is a clone for all $R \subseteq \RA$. Moreover, for every set of operations $F$ on a finite set, the clone generated by $F$ is $[F]=\Pol(\Inv(F))$ by the results of Bodnar\v{c}uk, Kalu\v{z}nin, Kotov, Romov and Geiger \cite{BodKalKotRom, Gei}.

	Given a set of relations $R \subseteq \RA$, a \emph{primitive positive formula over $R$} (pp. formula for short)  is a formula of the form
	\begin{equation}
		\Phi(  x_{1},\dots,x_{n})  =\exists y_{1}\exists y_{2}\dots\exists y_{m}\bigwith_{j=1}^{t}\rho_{j}\bigl(z_{1}^{(  j)  },				\dots,z_{r_{j}}^{(  j)  }\bigr) \label{eq pp},
	\end{equation}
	where $\rho_j \in R^{(r_j)}$, and $z_i^{(j)} \ (j = 1, \dots, t, \text{ and } i=1, \dots, r_j)$ are variables from the set $\{x_1, \dots, x_n, y_1, \dots, y_m\}$.
	The set 
\[
\rel{(\Phi)}:=\{ (a_1, \dots, a_n) \mid\Phi(a_1, \dots , a_n) \text{ is true}\}
\]
 is an $n$-ary relation, which is \emph{the relation defined by $\Phi$}.
	If $R \subseteq \RA$, then let $\langle R\rangle_\exists$ denote the set of all relations that can be defined by a primitive positive formula over $R\cup\{=\}$, and let $\langle R \rangle_\nexists$ denote the set of all relations that can be defined by a quantifier-free primitive positive formula over $R\cup\{=\}$. If $R \subseteq \RA$ contains the equality relation and $R$ is closed under primitive positive definability, then we say that $R$ is a \emph{relational clone}. The relational clone generated by $R$ is $\langle R \rangle_\exists=\Inv(\Pol(R))$ \cite{BodKalKotRom, Gei}.

For $f \in\OA^{(n)}$, we define the following relation on $A$, called the \emph{graph of} $f$:	
\[
f^{\bullet} =\{  (  a_{1},\dots,a_{n},b)  \mid f(a_{1},\dots,a_{n})  =b\}  \subseteq A^{n+1}.
\]
For $F\subseteq\OA$, let $F^{\bullet} =\{  f^{\bullet} \mid f\in F\}$. It is not hard to see that for any $f \in \OA^{(n)}$ and $g \in \OA^{(m)}$ the function $f$ commutes with $g$ if and only if $f$ preserves the graph of $g$ (or equivalently, if and only if $g$ preserves the graph of $f$). Therefore for any $F \subseteq \OA$ we have $F^\ast=\Pol(F^\bullet)$.

\section{Quantifier elimination} \label{quant}

Let $F \subseteq \OA$, then let $F^{\circ} $ denote the set of all relations that are solution sets of some equation over $F$:	
\[
F^{\circ} =\bigl\{ \Sol(f,g) \bigm| n\in\mathbb{N},\mathbb{~}f,g\in F^{(n)} \bigr\} \subseteq \RA.
\]
The following remark shows that the graph of an operation $f \in F$ also belongs to $F^\circ$.

\begin{remark} \label{rem2}
	Let $f \in \OA^{(n)}$, and define $\widetilde{f} \in \OA^{(n+1)}$ by $\widetilde{f}(x_1,\dots, x_n, x_{n+1}):=f(x_1,\dots, x_n)$. Then we have
	\begin{align*}
	\Sol\big(\widetilde{f}, e_{n+1}^{(n+1)}\big) & =\bigl\{ (a_1, \dots, a_n, b) \in A^{n+1} \bigm| \widetilde{f}(a_1, \dots, a_n, b)=e_{n+1}^{(n+1)}(a_1,\dots, a_n, b) \bigr\} \\
	& =\bigl\{ (a_1, \dots, a_n, b) \in A^{n+1} \bigm| f(a_1, \dots, a_n)=b \bigr\}= f^\bullet.
	\end{align*}
\end{remark}

The following three lemmas prepare the proof of Theorem~\ref{equiv}, which gives us an equivalent condition to Property~$\sdc$ that we will use in sections \ref{latt} and \ref{semil}.

\begin{lemma}\label{pontok}
	For every clone $C\leq\OA$, we have $C^{\bullet}\subseteq C^{\circ}$ and $\left\langle C^{\bullet}\right\rangle _{\exists}=\left\langle C^{\circ}\right\rangle _{\exists}$.
\end{lemma}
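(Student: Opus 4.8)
The plan is to prove the two assertions in turn, deriving the second from the first together with the general properties of primitive positive definability recalled in the preliminaries. For the inclusion $C^{\bullet}\subseteq C^{\circ}$, I would simply invoke Remark~\ref{rem2}: given $f\in C^{(n)}$, the auxiliary operation $\widetilde{f}\in C^{(n+1)}$ defined by $\widetilde{f}(x_1,\dots,x_n,x_{n+1})=f(x_1,\dots,x_n)$ lies in $C$, because $C$ is a clone and $\widetilde f$ is obtained from $f$ by composing with projections; likewise $e_{n+1}^{(n+1)}\in C$. Hence $f^{\bullet}=\Sol\bigl(\widetilde f,e_{n+1}^{(n+1)}\bigr)$ is of the form $\Sol(p,q)$ with $p,q\in C^{(n+1)}$, so $f^{\bullet}\in C^{\circ}$. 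Since $f$ was arbitrary, $C^{\bullet}\subseteq C^{\circ}$.

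For the equality of the generated relational clones, one inclusion is immediate: $C^{\bullet}\subseteq C^{\circ}$ gives $\langle C^{\bullet}\rangle_{\exists}\subseteq\langle C^{\circ}\rangle_{\exists}$, using monotonicity of $\langle\,\cdot\,\rangle_{\exists}$. For the reverse inclusion it suffices to show $C^{\circ}\subseteq\langle C^{\bullet}\rangle_{\exists}$, since $\langle C^{\bullet}\rangle_{\exists}$ is a relational clone (it contains $=$ and is closed under pp-definability) and so $\langle C^{\circ}\rangle_{\exists}\subseteq\langle C^{\bullet}\rangle_{\exists}$ follows. So I would take an arbitrary $\Sol(f,g)\in C^{\circ}$ with $f,g\in C^{(n)}$ and write it as a pp-formula over $C^{\bullet}\cup\{=\}$: namely
\[
\Sol(f,g)=\bigl\{(x_1,\dots,x_n)\mid \exists y\ f^{\bullet}(x_1,\dots,x_n,y)\wedge g^{\bullet}(x_1,\dots,x_n,y)\bigr\},
\]
since $(x_1,\dots,x_n)\in\Sol(f,g)$ iff there is a common value $y=f(\mathbf x)=g(\mathbf x)$. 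This exhibits $\Sol(f,g)$ as an element of $\langle C^{\bullet}\rangle_{\exists}$, completing the argument.

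I do not expect a serious obstacle here; the only point requiring a little care is making sure the membership $f^{\bullet}\in C^{\circ}$ is justified by an \emph{equation over $C$} rather than over $[C\cup\{\text{projections}\}]$ — but this is exactly what Remark~\ref{rem2} supplies, together with the fact that clones are closed under composition with projections. The heart of the lemma is the elementary but useful observation that passing from $f$ to its graph $f^{\bullet}$, and back, is realized by adding or existentially quantifying out a single variable, which is why the two relation sets generate the same relational clone even though $C^{\circ}$ typically contains relations that are not graphs of operations.
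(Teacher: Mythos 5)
Your proposal is correct and follows essentially the same route as the paper: Remark~\ref{rem2} gives $C^{\bullet}\subseteq C^{\circ}$, and the reverse generation is obtained by expressing $\Sol(f,g)$ as $\exists y\,\bigl(f^{\bullet}(\mathbf{x},y)\with g^{\bullet}(\mathbf{x},y)\bigr)$, so that $C^{\circ}\subseteq\langle C^{\bullet}\rangle_{\exists}$ and idempotence of $\langle\,\cdot\,\rangle_{\exists}$ finishes the argument. Your extra remark that $\widetilde{f}\in C$ because clones are closed under adding fictitious variables is a welcome (if routine) justification that the paper leaves implicit.
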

\begin{proof}
	In accordance with Remark~\ref{rem2}, for all $f \in C$ we have $\Sol(\widetilde{f},e_{n+1}^{(n+1)})=f^\bullet \in C^\circ$. Therefore $C^\bullet 			\subseteq C^\circ$, which implies that $ \langle C^\bullet \rangle_\exists \subseteq \langle C^\circ \rangle_\exists$. To prove the reversed containment, 		let us consider an arbitrary relation $\rho=\Sol(f,g) \in C^\circ$ with $f,g \in C^{(n)}$. Then, for any $(x_1, \dots, x_n) \in A^n$, we have
	\begin{align*}
	(x_1, \dots, x_n) \in  \rho & \iff f(x_1, \dots, x_n)=g(x_1, \dots, x_n)\\
	& \iff \exists y\colon f(x_1,\dots, x_n)=y \with g(x_1, \dots, x_n)=y \\
	& \iff \exists y\colon (x_1, \dots, x_n, y) \in f^\bullet \with (x_1, \dots, x_n, y) \in g^\bullet.
	\end{align*}
	This means that $\rho$ can be defined by a pp. formula over $\{f^\bullet, g^\bullet\}$, therefore $\rho \in \langle C^\bullet \rangle_\exists$. Thus, we 			obtain  $C^\circ \subseteq \langle C^\bullet \rangle_\exists$, and this implies that $\langle C^\circ \rangle_\exists \subseteq \langle \langle C^\bullet 			\rangle_\exists \rangle_\exists = \langle C^\bullet \rangle_\exists$. 
	Therefore $\langle C^{\bullet}\rangle _\exists=\langle C^{\circ}\rangle _\exists$.
\end{proof}

\begin{lemma}\label{kvmsystem}
	For every clone $C\leq\OA$ and $T\subseteq A^{n}$, there is a system $\mathcal{E}$ of $C$-equations such that $T=\Sol(\mathcal{E})$ if and only if $T\in\langle C^{\circ}\rangle _\nexists$.
\end{lemma}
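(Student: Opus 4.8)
The plan is to set up an exact, quantifier-free translation in both directions between systems of $C$-equations in the variables $x_1,\dots,x_n$ and quantifier-free primitive positive formulas over $C^{\circ}\cup\{=\}$; the only algebraic fact needed is that substituting projections into members of a clone keeps us inside that clone.

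For the ``only if'' direction, assume $T=\Sol(\mathcal{E})$ for a system $\mathcal{E}=\{(f_1,g_1),\dots,(f_t,g_t)\}$ of $n$-ary $C$-equations. Then $T=\bigcap_{i=1}^{t}\Sol(f_i,g_i)$, and each $\rho_i:=\Sol(f_i,g_i)$ belongs to $C^{\circ}$ by definition. Hence the formula $\Phi(x_1,\dots,x_n):=\bigwith_{i=1}^{t}\rho_i(x_1,\dots,x_n)$ is a quantifier-free primitive positive formula over $C^{\circ}$ with $\rel(\Phi)=T$, so $T\in\langle C^{\circ}\rangle_\nexists$. (For $t=0$ one uses instead that $A^{n}=\Sol(e_1^{(n)},e_1^{(n)})\in C^{\circ}$, since $C$ contains the projections.)

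For the ``if'' direction, suppose $T=\rel(\Phi)$ where $\Phi(x_1,\dots,x_n)=\bigwith_{j=1}^{t}\rho_j\bigl(z_1^{(j)},\dots,z_{r_j}^{(j)}\bigr)$ is a quantifier-free primitive positive formula over $C^{\circ}\cup\{=\}$; since there are no existential quantifiers, every $z_i^{(j)}$ is one of $x_1,\dots,x_n$, say $z_i^{(j)}=x_{\sigma_j(i)}$ for a map $\sigma_j\colon\{1,\dots,r_j\}\to\{1,\dots,n\}$. Translate the $j$-th conjunct into a single $n$-ary $C$-equation as follows. If $\rho_j$ is the equality relation, the conjunct asserts $x_{\sigma_j(1)}=x_{\sigma_j(2)}$, which is the $C$-equation $\bigl(e_{\sigma_j(1)}^{(n)},e_{\sigma_j(2)}^{(n)}\bigr)$. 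If $\rho_j=\Sol(f,g)$ with $f,g\in C^{(r_j)}$, the conjunct asserts $f(x_{\sigma_j(1)},\dots,x_{\sigma_j(r_j)})=g(x_{\sigma_j(1)},\dots,x_{\sigma_j(r_j)})$, i.e.\ the $C$-equation $(f',g')$ where $f':=f\bigl(e_{\sigma_j(1)}^{(n)},\dots,e_{\sigma_j(r_j)}^{(n)}\bigr)$ and $g':=g\bigl(e_{\sigma_j(1)}^{(n)},\dots,e_{\sigma_j(r_j)}^{(n)}\bigr)$ lie in $C^{(n)}$ because $C$ is closed under composition. Collecting these $C$-equations into a system $\mathcal{E}$, a tuple $\mathbf{a}\in A^{n}$ satisfies every equation of $\mathcal{E}$ exactly when $\Phi(a_1,\dots,a_n)$ holds, so $\Sol(\mathcal{E})=\rel(\Phi)=T$.

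No genuine difficulty arises; the only point requiring care is the indexing bookkeeping encoded in the maps $\sigma_j$ (repeated or omitted variables are automatically allowed, since composition with projections produces any desired identification or dummy argument), together with the observation that equality atoms correspond to equations between projections, which live in every clone.
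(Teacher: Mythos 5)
Your proposal is correct and follows essentially the same route as the paper: both directions are handled by a direct syntactic translation, where conjuncts $\Sol(f,g)$ applied to a choice of variables from $x_1,\dots,x_n$ are converted into $n$-ary $C$-equations by identifying variables and adding fictitious ones (your composition with projections is exactly the paper's construction of $\widetilde{f_j},\widetilde{g_j}$), and conversely each equation of a system yields a conjunct over $C^{\circ}$. Your explicit treatment of equality atoms and of the empty system are minor points of care that the paper leaves implicit, but they do not change the argument.
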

\begin{proof}
	Let $\Phi$ be an arbitrary quantifier-free pp. formula over $C^\circ$. By definition, $\Phi$ is of the form 
\[
\Phi(x_1, \dots, x_n)=\bigwith_{j=1}^{t} \Sol(f_j, g_j)=\bigwith_{j=1}^{t} \Big( f_j\big(z_1^{(j)}, \dots, z_{r_j}^{(j)}\big)=g_j\big(z_1^{(j)}, \dots, z_{r_j}^{(j)}\big) \Big),
\]
	where $n, t \in \N, f_j, g_j \in C^{(r_j)}$ and $z_1^{(j)}, \dots, z_{r_j}^{(j)} \in \{x_1, \dots, x_n\}$ for all $j= 1, \dots, t$. We define the operations $\widetilde{f_j}(x_1, \dots, x_n):=f_j(z_1^{(j)}, \dots, z_{r_j}^{(j)})$ and $\widetilde{g_j}(x_1, \dots, x_n):=g_j(z_1^{(j)}, \dots, z_{r_j}^{(j)})$ (by identifying variables and by adding fictitious variables) for all $j=1, \dots, t$.
	Then $\Phi$ is equivalent to the formula
\[
\Psi(x_1, \dots, x_n)=\bigwith_{j=1}^{t} \Big( \widetilde{f_j}\big(x_1, \dots, x_n\big)=\widetilde{g_j}\big(x_1, \dots, x_n\big) \Big),
\]
	and $\widetilde{f_j}, \widetilde{g_j} \in C^{(n)}$ for all $j= 1, \dots, t$. Since $\Phi$ and $\Psi$ are equivalent, they define the same set $T \subseteq A^n$, and it is obvious that the set defined by $\Psi$ is the solution set of the system $\{(\widetilde{f_1},\widetilde{g_1}),\dots,(\widetilde{f_t},\widetilde{g_t})\}$. Conversely, it is clear that every solution set can be defined by a quantifier-free pp. formula of the form of $\Psi$.
\end {proof}

\begin{lemma} \label{nec}
	For every clone $C \leq \OA$, we have $\Inv{(C^\ast)}=\langle C^\bullet \rangle_\exists$. Consequently, a set $T \subseteq A^n$ is closed under $C^		\ast$ if and only if $T\in\langle C^\circ \rangle _\exists$.
\end{lemma}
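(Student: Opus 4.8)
The plan is to obtain the identity $\Inv(C^{\ast})=\langle C^{\bullet}\rangle_{\exists}$ by chaining together facts already recorded in the preliminaries. First, the graph characterization of commutation noted at the end of Section~\ref{prel} gives $C^{\ast}=\Pol(C^{\bullet})$. Second, since $A$ is finite, the Galois-connection theorem of Bodnar\v{c}uk--Kalu\v{z}nin--Kotov--Romov and Geiger yields $\Inv(\Pol(R))=\langle R\rangle_{\exists}$ for every $R\subseteq\RA$; applying this with $R=C^{\bullet}$ gives $\Inv(\Pol(C^{\bullet}))=\langle C^{\bullet}\rangle_{\exists}$. Putting the two together, $\Inv(C^{\ast})=\Inv(\Pol(C^{\bullet}))=\langle C^{\bullet}\rangle_{\exists}$, which is the first assertion.

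For the ``consequently'' part, I would first translate ``closed under $C^{\ast}$'' into the language of the $\Pol$--$\Inv$ Galois connection: by the remark that $f\vartriangleright\rho$ is equivalent to $\rho$ being closed under $f$, a set $T\subseteq A^{n}$ is closed under $C^{\ast}$ exactly when every operation in $C^{\ast}$ preserves $T$, i.e., when $T\in\Inv(C^{\ast})$. Combining this with the identity just proved and with Lemma~\ref{pontok} (which supplies $\langle C^{\bullet}\rangle_{\exists}=\langle C^{\circ}\rangle_{\exists}$), we conclude that $T$ is closed under $C^{\ast}$ if and only if $T\in\langle C^{\circ}\rangle_{\exists}$, as claimed.

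There is essentially no real obstacle here: the argument is a bookkeeping exercise that glues the stated preliminaries together. The only point deserving a moment's care is to confirm that the cited Galois theorem is being applied to the (a priori non-relational) generating set $C^{\bullet}$ over the finite set $A$ --- but that is exactly the form in which $\langle R\rangle_{\exists}=\Inv(\Pol(R))$ was recorded above, so nothing further is needed. (If one wished to avoid invoking the full theorem, the inclusion $\langle C^{\bullet}\rangle_{\exists}\subseteq\Inv(C^{\ast})$ is routine, since pp.\ definability preserves invariance; only the reverse inclusion genuinely uses finiteness of $A$.)
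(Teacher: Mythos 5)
Your proposal is correct and follows essentially the same route as the paper: the identity $\Inv(C^\ast)=\langle C^\bullet\rangle_\exists$ via $C^\ast=\Pol(C^\bullet)$ and $\Inv(\Pol(R))=\langle R\rangle_\exists$, and the second claim by observing that closure under $C^\ast$ means $T\in\Inv(C^\ast)$ and invoking Lemma~\ref{pontok}. Nothing is missing.
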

\begin{proof}
	From Section~\ref{prel} using that $F^\ast=\Pol(F^\bullet)$ and that $\Inv(\Pol(R))= \langle R \rangle_\exists$, we have
\[
\Inv{(C^\ast)} =\Inv{(\Pol{(C^\bullet)})}=\langle C^\bullet \rangle_\exists.
\]
	\noindent The second statement of the lemma follows immediately from Lemma~\ref{pontok} by observing that $T$ is closed under $C^\ast$ if and only if 
	$T \in \Inv(C^\ast)$.
\end{proof}

\begin{theorem}[\cite{ETTW}] \label{ttetel}
	For every clone $C\leq\OA$ and $T\subseteq A^{n}$, if there is a system $\mathcal{E}$ of $C$-equations such that $T=					\Sol(\mathcal{E})  $, then $T$ is closed under $C^{\ast}$.
\end{theorem}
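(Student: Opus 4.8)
The plan is to combine the three preparatory lemmas with the fact that $F^\ast = \Pol(F^\bullet)$, which was established in the Preliminaries. The statement asserts that every solution set of a system of $C$-equations is closed under $C^\ast$; by Lemma~\ref{nec} being closed under $C^\ast$ is the same as lying in $\langle C^\circ\rangle_\exists$, and by Lemma~\ref{kvmsystem} being a solution set of a system of $C$-equations is the same as lying in $\langle C^\circ\rangle_\nexists$. So the theorem reduces to the purely formal containment $\langle C^\circ\rangle_\nexists \subseteq \langle C^\circ\rangle_\exists$, which is immediate because every quantifier-free primitive positive formula over $C^\circ\cup\{=\}$ is in particular a primitive positive formula over $C^\circ\cup\{=\}$ (one is simply allowed to use no existential quantifiers).

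Concretely, I would proceed as follows. Let $T\subseteq A^n$ and suppose $T=\Sol(\mathcal{E})$ for some system $\mathcal{E}$ of $C$-equations. First, by Lemma~\ref{kvmsystem}, $T\in\langle C^\circ\rangle_\nexists$, i.e., $T$ is defined by a quantifier-free pp. formula over $C^\circ\cup\{=\}$. Second, since $\langle C^\circ\rangle_\nexists \subseteq \langle C^\circ\rangle_\exists$ trivially, we get $T\in\langle C^\circ\rangle_\exists$. Third, by Lemma~\ref{pontok} we have $\langle C^\circ\rangle_\exists = \langle C^\bullet\rangle_\exists$, and by Lemma~\ref{nec} (or directly from $\Inv(C^\ast)=\Inv(\Pol(C^\bullet))=\langle C^\bullet\rangle_\exists$) this equals $\Inv(C^\ast)$. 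Hence $T\in\Inv(C^\ast)$, which by the remark in the Pol-Inv subsection means exactly that $T$ is closed under $C^\ast$.

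There is essentially no serious obstacle here: this result is quoted from \cite{ETTW} and the whole point of the three lemmas in this section is to repackage it in the language of pp. formulas and the $\Pol$-$\Inv$ Galois connection. The only thing to be careful about is bookkeeping with the equality relation: $C^\circ$ contains all $\Sol(f,g)$ but one must make sure the relevant closure operators $\langle\cdot\rangle_\exists$ and $\langle\cdot\rangle_\nexists$ are taken over $C^\circ\cup\{=\}$ as in the definitions, so that both Lemma~\ref{kvmsystem} and Lemma~\ref{nec} apply verbatim. Given that, the proof is a two-line chain of inclusions and can be stated as such.
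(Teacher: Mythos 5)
Your argument is correct and is essentially identical to the paper's own proof: both invoke Lemma~\ref{kvmsystem} to get $T\in\langle C^\circ\rangle_\nexists\subseteq\langle C^\circ\rangle_\exists$ and then Lemma~\ref{nec} (whose second statement already packages the use of Lemma~\ref{pontok} and $C^\ast=\Pol(C^\bullet)$) to conclude that $T$ is closed under $C^\ast$. Your extra remarks about the equality relation and about unfolding Lemma~\ref{nec} via $\Inv(\Pol(C^\bullet))$ are just fine but add nothing beyond the paper's two-line argument.
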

\begin{proof}
	Let $C\leq\OA, T\subseteq A^{n}$, and let $\mathcal{E}$ be a system of $C$-equations and $T=\Sol(\mathcal{E})$. By Lemma~\ref{kvmsystem} we 	have  $T \in \langle C^\circ \rangle_\nexists \subseteq \langle C^\circ \rangle_\exists$. Using Lemma~\ref{nec}, this means that $T$ is closed under $C^\ast$.
\end{proof}

	The previous theorem shows that in Property~$\sdc$, condition (a) implies (b). Therefore, for all clones $C \leq \OA$, it suffices to investigate the implication $(\text{b}) \implies (\text{a})$. As a consequence of lemmas \ref{pontok}, \ref{kvmsystem} and \ref{nec}, we obtain the promised equivalent reformulation of Property~$\sdc$ in terms of quantifier elimination.

\begin{theorem} \label{equiv}
	For every clone $C\leq\OA$, the following five conditions are equivalent:
	\begin{enumerate}[label=(\roman*)]
		\item $C$ has Property~$\sdc$;
		\item $\langle C^\circ\rangle _\nexists=\Inv{(C^{\ast})}$;
		\item $\langle C^\circ\rangle_\nexists=\langle C^\circ\rangle_\exists$;
		\item every primitive positive formula over $C^{\circ}$ is equivalent to a quantifier-free primitive positive formula over $C^\circ$;
		\item $\langle C^\circ\rangle _\nexists$ is a relational clone.
	\end{enumerate}
\end{theorem}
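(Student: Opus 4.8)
The plan is to prove the chain of equivalences $(\mathrm{i}) \Leftrightarrow (\mathrm{ii}) \Leftrightarrow (\mathrm{iii}) \Leftrightarrow (\mathrm{iv}) \Leftrightarrow (\mathrm{v})$ by stitching together the three preceding lemmas, so that almost all of the work has already been done and what remains is bookkeeping. First I would record the translations supplied by the lemmas: by Lemma~\ref{kvmsystem}, a set $T$ is the solution set of a system of $C$-equations precisely when $T \in \langle C^\circ\rangle_\nexists$; by Lemma~\ref{nec}, $T$ is closed under $C^\ast$ precisely when $T \in \langle C^\circ\rangle_\exists$ (equivalently $T \in \Inv(C^\ast)$, using $\Inv(C^\ast) = \langle C^\bullet\rangle_\exists = \langle C^\circ\rangle_\exists$, the last equality being Lemma~\ref{pontok}). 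With these two dictionaries in hand, Property~$\sdc$ — which asserts that conditions (a) and (b) of the statement of (SDC) are equivalent for every $n$ and every $T \subseteq A^n$ — says exactly that $T \in \langle C^\circ\rangle_\nexists \iff T \in \langle C^\circ\rangle_\exists$ for all such $T$, i.e., that the two relational classes coincide. This gives $(\mathrm{i}) \Leftrightarrow (\mathrm{iii})$ directly, and $(\mathrm{i}) \Leftrightarrow (\mathrm{ii})$ follows by additionally invoking $\Inv(C^\ast) = \langle C^\circ\rangle_\exists$.

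Next I would handle $(\mathrm{iii}) \Leftrightarrow (\mathrm{iv})$, which is essentially a restatement: the inclusion $\langle C^\circ\rangle_\nexists \subseteq \langle C^\circ\rangle_\exists$ always holds (a quantifier-free pp.\ formula is in particular a pp.\ formula), so (iii) is equivalent to the reverse inclusion $\langle C^\circ\rangle_\exists \subseteq \langle C^\circ\rangle_\nexists$, and this reverse inclusion says precisely that every relation defined by a pp.\ formula over $C^\circ \cup \{=\}$ is also defined by a quantifier-free one — modulo the minor point that the equality relation itself is quantifier-free pp.-definable (indeed $x = y$ is already atomic), so adjoining $\{=\}$ causes no trouble on either side. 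That is condition (iv).

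Finally, for $(\mathrm{iii}) \Leftrightarrow (\mathrm{v})$ I would argue as follows. By Lemma~\ref{pontok} and Lemma~\ref{nec}, $\langle C^\circ\rangle_\exists = \langle C^\bullet\rangle_\exists = \Inv(\Pol(C^\bullet)) = \Inv(C^\ast)$ is always a relational clone (it is of the form $\Inv(\text{clone})$, hence closed under pp.-definability and contains $=$). So if (iii) holds, then $\langle C^\circ\rangle_\nexists$ equals this relational clone and is therefore a relational clone, giving (v). Conversely, suppose (v) holds, i.e., $\langle C^\circ\rangle_\nexists$ is a relational clone. Since $C^\circ \subseteq \langle C^\circ\rangle_\nexists$ and a relational clone containing $C^\circ$ must contain the relational clone it generates, we get $\langle C^\circ\rangle_\exists \subseteq \langle C^\circ\rangle_\nexists$; combined with the trivial reverse inclusion this yields (iii). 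The one point deserving a sentence of care is that $\langle C^\circ\rangle_\exists$ is by definition the \emph{smallest} relational clone containing $C^\circ$ (this is the content of the Bodnar\v{c}uk--Geiger theorem $\langle R\rangle_\exists = \Inv(\Pol(R))$ quoted in Section~\ref{prel}), so any relational clone containing $C^\circ$ contains $\langle C^\circ\rangle_\exists$.

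There is no serious obstacle here; the proof is a formal consequence of the machinery already established. The only place one must be slightly attentive is in the quantifier over $n$ and $T$: Property~$\sdc$ and conditions (ii)--(v) are all ``for every arity'' statements, so each equivalence should be read as holding uniformly in $n$, which it does because lemmas \ref{pontok}, \ref{kvmsystem} and \ref{nec} are themselves uniform in $n$. I would present the argument as a short cycle, $(\mathrm{i}) \Leftrightarrow (\mathrm{iii})$, then $(\mathrm{ii}) \Leftrightarrow (\mathrm{iii})$, $(\mathrm{iv}) \Leftrightarrow (\mathrm{iii})$, and $(\mathrm{v}) \Leftrightarrow (\mathrm{iii})$, each taking only a few lines.
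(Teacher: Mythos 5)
Your proposal is correct and follows essentially the same route as the paper: Lemma~\ref{kvmsystem} and Lemma~\ref{nec} (with Lemma~\ref{pontok}) serve as the two dictionaries, (iii)$\iff$(iv) is the trivial restatement, and (iii)$\iff$(v) rests on the fact that the relational clone generated by $C^\circ$ (equivalently by $\langle C^\circ\rangle_\nexists$) is $\langle C^\circ\rangle_\exists=\Inv(C^\ast)$. The only difference is cosmetic: you route every equivalence through (iii), while the paper chains (i)$\iff$(ii)$\iff$(iii); the ingredients and their roles are identical.
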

\begin{proof}
	(i)$\iff$(ii): By Lemma~\ref{kvmsystem}, $T$ is the solution set of some system of equations over $C$ if and only if $T \in \langle C^\circ \rangle_\nexists$.
	
	(ii)$\iff$(iii): This follows from (the proof of) Lemma~\ref{nec}.
	
	(iii)$\iff$(iv): This is trivial.

	(iii)$\iff$(v): This follows from the fact that the relational clone generated by $\langle C^\circ\rangle_\nexists$ is $\langle C^\circ\rangle_\exists$.
\end{proof}

In the following corollary we will see that Theorem~\ref{equiv} implies that $C^\ast$ is the only clone that can describe solution sets over $C$ (if there is such a clone at all). Thus, the abbreviation SDC can also stand for ``Solution sets are Definable by closure under any Clone''.

\begin{corollary} \label{csakccsillag}
	Let $C \leq \OA$ be a clone, and assume that there is a clone $D$ such that for all $n\in \N$ and $T \subseteq A^n$ the following equivalence holds:
	\[
	T \text{ is the solution set of a system of $C$-equations} \iff T \text{ is closed under } D.
	\]
	Then we have $D=C^\ast$.
\end{corollary}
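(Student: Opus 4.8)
The plan is to show $D = C^\ast$ by establishing containment in both directions, using the hypothesis together with the characterizations from Theorem~\ref{equiv} and Lemma~\ref{nec}. The key observation is that the hypothesis says exactly that ``closed under $D$'' and ``closed under $C^\ast$'' describe the same family of sets across all arities $n$, since by Theorem~\ref{ttetel} and the definition of Property~$\sdc$, the solution sets of systems of $C$-equations are among the $C^\ast$-closed sets, and conversely the hypothesis forces these two families to coincide with the $D$-closed sets. In other words, $\Inv(D) = \Inv(C^\ast)$ as sets of relations (restricting to the relevant arities suffices because a clone is determined by its invariant relations on a finite set).

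First I would make precise the translation between closure and invariance: a set $T \subseteq A^n$ is closed under a clone $E$ if and only if $T \in \Inv(E)$ (this is noted in Subsection~\ref{soper} and again in Lemma~\ref{nec}). So the hypothesis, quantified over all $n$, states that a set $T$ lies in $\Inv(D)$ precisely when it is the solution set of a system of $C$-equations. On the other hand, applying the hypothesis is not yet enough on its own; I also need that the family of solution sets of systems of $C$-equations equals $\Inv(C^\ast)$ — but this is not assumed, only the ``$\subseteq$'' direction (Theorem~\ref{ttetel}) holds unconditionally. The way around this: the hypothesis itself, combined with Theorem~\ref{ttetel}, shows that every $D$-closed set is $C^\ast$-closed, i.e.\ $\Inv(D) \subseteq \Inv(C^\ast)$, hence (applying the Galois connection $\Pol$–$\Inv$, which reverses inclusions and satisfies $\Pol(\Inv(E)) = E$ for a clone $E$ on a finite set) we get $C^\ast = \Pol(\Inv(C^\ast)) \subseteq \Pol(\Inv(D)) = D$.

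For the reverse inclusion $D \subseteq C^\ast$, I would argue that every solution set of a system of $C$-equations is, by Theorem~\ref{ttetel}, closed under $C^\ast$; so for each such $T$ and each $f \in D^{(m)}$, I want to conclude $f$ preserves $T$. Concretely, consider the graph relation $f^\bullet$ for a basic operation $f \in C$: by Remark~\ref{rem2} and Lemma~\ref{pontok}, $f^\bullet \in C^\circ$, so $f^\bullet$ is the solution set of a single $C$-equation (of arity $n+1$), hence is closed under $D$ by the hypothesis. Thus every $g \in D$ preserves $f^\bullet$, which by the last paragraph of Subsection~\ref{subsect Eq and Sol} (the ``Pol-Inv'' subsection) means $g$ commutes with $f$. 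Since this holds for all generators $f$ of $C$, and commuting with a fixed operation is preserved under composition, every $g \in D$ lies in $C^\ast$, i.e.\ $D \subseteq C^\ast$.

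Combining the two inclusions gives $D = C^\ast$. I expect the main (though modest) obstacle to be bookkeeping around arities: the hypothesis is stated for all $n$, and one must be careful that invoking it for the specific relations $f^\bullet$ (of arity $n+1$ where $f$ is $n$-ary) is legitimate, and that the equality $\Pol(\Inv(E)) = E$ is available because $A$ is finite and $E$ is a clone — both of which are guaranteed by the setup in Subsection~2.4. No genuinely new computation is needed; the whole argument is a careful chaining of the Galois-connection facts and the graph/commutation correspondence already recorded in the excerpt.
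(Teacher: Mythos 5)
Your proof is correct, but it follows a different route than the paper. The paper's own proof stays entirely inside the quantifier-elimination framework: from the hypothesis and Lemma~\ref{kvmsystem} it gets $\langle C^\circ\rangle_\nexists=\Inv(D)$, observes that the left-hand side is therefore a relational clone, invokes the equivalence (v)$\iff$(ii) of Theorem~\ref{equiv} to conclude $\langle C^\circ\rangle_\nexists=\Inv(C^\ast)$, and then applies $\Pol$ once to the resulting equality $\Inv(C^\ast)=\Inv(D)$. You instead prove the two inclusions separately and avoid Theorem~\ref{equiv} altogether: $C^\ast\subseteq D$ follows from Theorem~\ref{ttetel} plus the hypothesis (giving $\Inv(D)\subseteq\Inv(C^\ast)$) and the Galois fact $\Pol(\Inv(E))=E$ for clones on a finite set; $D\subseteq C^\ast$ follows by noting that each graph $f^\bullet$ ($f\in C$) is the solution set of the single $C$-equation of Remark~\ref{rem2}, hence $D$-closed by hypothesis, and preservation of $f^\bullet$ is exactly commutation with $f$. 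What each approach buys: the paper's argument is two lines given the machinery already established, and it makes explicit the byproduct that the hypothesis forces $C$ to have Property~$\sdc$; yours shows that the uniqueness of $D$ needs none of the quantifier-elimination apparatus, only Theorem~\ref{ttetel}, the graph/commutation correspondence, and the $\Pol$--$\Inv$ Galois connection. Two cosmetic points: your appeal to ``generators of $C$'' is unnecessary (and a clone need not come with a distinguished generating set) --- the graph argument applies verbatim to every $f\in C$; and one should note explicitly that $\widetilde f\in C$ because $C$ contains the projections and is closed under composition, which is what makes $(\widetilde f,e_{n+1}^{(n+1)})$ a legitimate $C$-equation.
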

\begin{proof}
	 The condition in the corollary gives us by Lemma~\ref{kvmsystem} that for all $T \subseteq A^n$, we have $T \in \langle C^\circ \rangle_\nexists$ if and only if $T \in \Inv{(D)}$. This means that $\langle C^\circ \rangle_\nexists = \Inv{(D)}$, thus $\langle C^\circ\rangle _\nexists$ is a relational clone. Therefore, by  Theorem~\ref{equiv} this is equivalent to the condition $\Inv{(C^\ast)}= \langle C^\circ \rangle_\nexists =\Inv{(D)}$. Applying the operator $\Pol$ to the last equality we get that 
\[
C^\ast=\Pol{(\Inv{(C^\ast)})}=\Pol{(\Inv{(D)})}=D.
\]
\end{proof}

\section{Systems of equations over lattices}\label{latt}

	In this, and in the following section $\L=(L, \land, \lor)$ denotes a finite lattice, with meet operation $\land$ and join operation $\lor$. Furthermore, $0_\L$ 		denotes the least and $1_\L$ denotes the greatest element of $\L$ (that is, $0_\L=\bigwedge L$ and $1_\L=\bigvee L$).

The following lemma shows that Property~$\sdc$ does not hold for non-distributive lattices, i.e., solution sets of systems of equations over a non-distributive lattice can not be characterized via closure conditions.

\begin{lemma} \label{halo1}
	Let $\L=(L, \land, \lor)$ be a finite lattice. If Property~$\sdc$ holds for $C=[\land, \lor]$, then $\L$ is a distributive lattice.
\end{lemma}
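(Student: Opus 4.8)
The plan is to prove the contrapositive: if $\L$ is a finite lattice that is not distributive, then $C = [\land, \lor]$ fails Property~$\sdc$. By the classical characterization, a non-distributive lattice contains a sublattice isomorphic to $\M_3$ or $\N_5$, but a cleaner route is to produce directly a primitive positive formula over $C^\circ$ that is not equivalent to any quantifier-free one, and then invoke the equivalence (i)$\iff$(iv) of Theorem~\ref{equiv}. Concretely, I would exhibit a set $T \subseteq L^n$ that is closed under $C^\ast$ but is not the solution set of any system of $C$-equations.

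First I would identify the centralizer $C^\ast = [\land,\lor]^\ast$. Since $\land$ and $\lor$ are idempotent, every operation in $C^\ast$ is idempotent; and since $\L$ is a lattice, the operation $f(x,y,z) = (x \land y) \lor (y \land z) \lor (z \land x)$ (the median term) commutes with $\land$ and $\lor$ precisely when $\L$ is distributive. More generally, the key point is that in a distributive lattice $C^\ast$ is large (it contains all lattice polynomials preserving the order structure appropriately), whereas the obstruction to $\sdc$ should come from the failure of the median to be a well-behaved term. So the strategy is: in a non-distributive $\L$, pick elements $a,b,c$ generating a copy of $\M_3$ or $\N_5$, and consider the relation $\rho \subseteq L^2$ consisting of all pairs $(x,y)$ with $x \leq y$ together with some carefully chosen extra pairs — or, more likely, take $\rho$ to be the pp-definable relation obtained by projecting out a variable from a conjunction of graphs, and show that this projection cannot be captured without the quantifier because quantifier-free $C^\circ$-formulas define only relations closed under the term operations of $\L$ in a way that fails here.

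The cleanest concrete approach: let $\leq$ denote the order relation, which lies in $C^\circ$ since $\leq\, = \Sol(e_1 \land e_2, e_1)$ (i.e. $x \leq y \iff x \land y = x$). Then consider the pp-formula $\Phi(x,y) = \exists z\, (x \leq z \land z \leq y)$, which defines $\leq$ again, so that is not the obstruction; instead I would use a ternary relation like $\Phi(x,y,u) = \exists z\,( x\land y \leq z \,\wedge\, z \leq x \,\wedge\, z \leq y \,\wedge\, u \leq z \vee \dots)$ designed so that the existential quantifier performs a join or meet that is not available as a term on the relevant restricted set. The heart of the argument is to show that $\langle C^\circ\rangle_\nexists$ is not closed under existential quantification over a non-distributive $\L$, equivalently that $\langle C^\circ\rangle_\nexists \subsetneq \langle C^\circ \rangle_\exists$. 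I would do this by finding a relation $\sigma \in \langle C^\circ\rangle_\exists$, verify $\sigma$ is closed under $C^\ast$ (hence in $\Inv(C^\ast) = \langle C^\circ\rangle_\exists$ by Lemma~\ref{nec}), and then show $\sigma \notin \langle C^\circ\rangle_\nexists$ by exhibiting a tuple operation — a polymorphism of all relations in $C^\circ$ but not of $\sigma$ — which must exist because $\sigma$ is not a solution set; equivalently, show directly that no finite system of $\land,\lor$-equations has solution set $\sigma$, by a dimension/cardinality or explicit-evaluation argument using the $\M_3$ or $\N_5$ elements.

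The main obstacle I anticipate is pinning down the centralizer $[\land,\lor]^\ast$ precisely enough (or at least bounding it from above) so that one can certify that the chosen $\sigma$ really is $C^\ast$-closed; the commutation conditions with $\land$ and $\lor$ impose that members of $C^\ast$ are monotone idempotent operations satisfying extra distributive-type identities, and working out which operations survive over a specific non-distributive lattice requires care. A secondary obstacle is making the choice of $\sigma$ uniform across both $\M_3$ and $\N_5$ (and all non-distributive $\L$, which contain one of these as a sublattice but need not be generated by it); the likely fix is to first reduce to the sublattice case and argue that a sub-obstruction lifts, or to phrase everything in terms of the failure of the distributive law $a \land (b \lor c) = (a \land b) \lor (a \land c)$ at a specific triple $(a,b,c)$ and build $\sigma$ from that inequality directly.
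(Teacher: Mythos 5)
Your high-level strategy coincides with the paper's (take the contrapositive, produce a relation that is pp-definable over $C^\circ$ --- hence closed under $C^\ast$ --- but is not a solution set of any system of $C$-equations), but the proposal stops exactly where the actual work begins: you never fix the relation $\sigma$, and you never carry out the argument that it fails to be a solution set; both are explicitly deferred as ``obstacles''. That is a genuine gap, because the whole content of the lemma is the concrete construction. The paper's choice is the binary relation
\[
T=\{(x,y)\in L^2 \mid \exists u\in L\colon u\land x=u\land y \text{ and } u\lor x=u\lor y\},
\]
which exploits the classical fact that in a distributive lattice the two conditions force $x=y$, whereas a copy of $N_5$ or $M_3$ supplies pairs $(a,b)$, $(b,a)$ with $a\neq b$ in $T$. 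Non-definability is then a \emph{finite} check: since the free lattice on two generators is $\{x,y,x\land y,x\lor y\}$, there are only finitely many nontrivial binary $C$-equations, each is falsified on $T$ by $(a,b)$ or $(b,a)$, so $\Eq_C(T)$ is trivial and $\Sol(\Eq_C(T))=L^2\neq T$ because $(0_\L,1_\L)\notin T$ (cf. Remark~\ref{soleqt}). Your proposal gives no mechanism replacing this enumeration (a ``dimension/cardinality'' argument is only gestured at), and your suggested fallback of ``reducing to the sublattice and lifting the obstruction'' does not obviously work, since Property~$\sdc$ concerns closure and solution sets over all of $L$, not over a sublattice.

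Two further points. First, the obstacle you call the main one --- pinning down $[\land,\lor]^\ast$ well enough to certify that $\sigma$ is $C^\ast$-closed --- is a non-issue: by Lemma~\ref{nec}, any relation defined by a primitive positive formula over $C^\circ$ lies in $\Inv(C^\ast)$, so closure under the centralizer is automatic for any existentially defined candidate (this is exactly how the paper certifies $T$, with no computation of $C^\ast$ whatsoever); your parenthetical ``verify $\sigma$ is closed under $C^\ast$, hence in $\Inv(C^\ast)$'' has the implication backwards. Second, the side claim that the median $(x\land y)\lor(y\land z)\lor(z\land x)$ commutes with $\land$ and $\lor$ precisely when $\L$ is distributive is false: already in the two-element lattice the median fails to commute with $\land$ (take the columns $(1,0,1)$ and $(0,1,1)$), so this cannot serve as the intended source of the obstruction.
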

\begin{proof}
	Let $\L=(L, \land, \lor)$ be a non-distributive finite lattice and $C=[\land, \lor] \leq \O_L$. By Lemma~\ref{nec}, the set 
\[
T=\{ (x,y) \mid \exists u \in L\colon u \land x = u \land y \text{ and } u \lor x = u \lor y \} \subseteq L^2
\]
	 is closed under $C^\ast$. We prove that $T$ is not the solution set of a system of equations over $C$, hence Property~$\sdc$ does not hold for 
	$C$. Suppose that there exists a system of $C$-equations $\E$ such that $T=\Sol(\E)$. Since $\L$ is not distributive, by Birkhoff's theorem we know that there is a sublattice of $\L$, which is isomorphic either to $N_5$ or $M_3$. Now neither of the equations 
\[
x=y \ (\iff x \land y = x \lor y),\quad x=x \land y,\quad x=x \lor y,\quad y = x \land y, \quad y = x \lor y
\]
	belong to $\E$; we prove this by presenting a counterexample for each equation. These counterexamples are shown in  Figure~\ref{fig2}, where we choose the elements $a$ and $b$ as presented in the figure. (Note that an element $u$, chosen like on the figure, shows that $(a,b), (b, a) \in T$. In the table, the entry $(x_1,y_1)$ in the line starting with the term $s(x,y)$ and column starting with the term $t(x,y)$ witnesses that $(x_1,y_1)$ is not a solution of $s(x, y) = t(x, y)$.)
\begin{figure}
		\begin{subfigure}{0.25\textwidth}
			\includegraphics[width=0.7\linewidth, center]{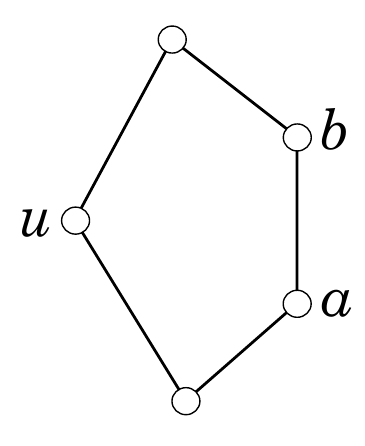}
		\end{subfigure}
		\begin{subfigure}{0.25\textwidth}
			\includegraphics[width=0.7\linewidth, center]{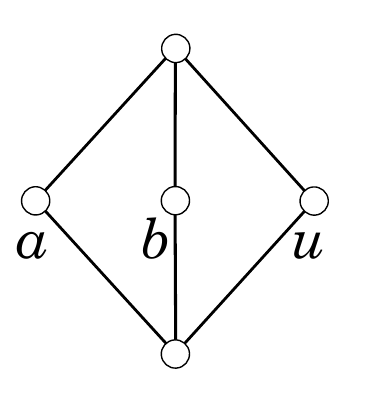}
		\end{subfigure}
		\begin{subfigure}{0.45\textwidth} \renewcommand{\arraystretch}{1.15}
		\begin{tabular}
			[c]{c|cccc}
			$=$ & $x$ & $y$ & $x \land y$ & $x \lor y$\\\hline
			$x$ & $-$ & $(a,b)$ & $(b,a)$ & $(a,b)$\\
			$y$ & $-$ & $-$ & $(a,b)$ & $(b,a)$\\
			$x \land y$ & $-$ & $-$ & $-$ & $(a,b)$\\
			$x \lor y$ & $-$ & $-$ & $-$ & $-$
		\end{tabular}
		\end{subfigure}
		\caption{Counterexamples showing that these equations do not belong to $\E$.}  \label{fig2}
\end{figure}
	There are no other non-trivial 2-variable equations over $C$, therefore we get that $T$ satifies only trivial equations, hence $T=L^2$. This is a contradiction, since $(0_\L, 1_\L) \notin T$.
\end{proof}

	The following lemma will help us prove that Property~$\sdc$ can only hold for Boolean lattices. Before the lemma, for a distributive lattice $\L$ we define the \emph{median} of the elements $x, y, z \in L$ as
\[
m(x, y, z) = (x \land y) \lor (x \land z) \lor (y \land z) = (x \lor y) \land (x \lor z) \land (y \lor z).
\]
\begin{lemma} \label{pq}
	Let $\L=(L, \land, \lor)$ be a distributive lattice, and for all $x, y, z, u \in L$ let 
\[
p(x,y,z,u)=(x \land y) \lor (x \land z) \lor (y \land z) \lor (u \land x) \lor (u \land y) \lor (u \land z).
\]
	Then for all $x, y, z, u \in L$ we have
\[
p(x,y,z,u)=x \lor y \lor z \lor u \iff m(x,y,z) \lor u = x \lor y \lor z.
\]
\end{lemma}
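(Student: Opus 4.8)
The plan is to first \emph{simplify} the polynomial $p$ using the distributive law, and then observe that the statement becomes almost trivial. Write $s := x \lor y \lor z$. By distributivity,
\[
(u \land x) \lor (u \land y) \lor (u \land z) = u \land (x \lor y \lor z) = u \land s,
\]
so that $p(x,y,z,u) = m(x,y,z) \lor (u \land s)$. Since each of $x \land y$, $x \land z$, $y \land z$ lies below $s$, we have $m(x,y,z) \le s$, and clearly $u \land s \le s$ as well. Hence the key observation is the inequality
\[
p(x,y,z,u) \le s = x \lor y \lor z \qquad \text{for all } x,y,z,u \in L.
\]

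With this in hand both implications are short. For ``$\Rightarrow$'': assume $p(x,y,z,u) = x \lor y \lor z \lor u = s \lor u$. Combining this with $p(x,y,z,u) \le s$ yields $s \lor u \le s$, i.e.\ $u \le s$; therefore $u \land s = u$, so $p(x,y,z,u) = m(x,y,z) \lor u$, while simultaneously $p(x,y,z,u) = s \lor u = s$. Thus $m(x,y,z) \lor u = x \lor y \lor z$. For ``$\Leftarrow$'': assume $m(x,y,z) \lor u = x \lor y \lor z = s$. Then $u \le s$, hence $u \land s = u$ and $s \lor u = s$, so $p(x,y,z,u) = m(x,y,z) \lor (u \land s) = m(x,y,z) \lor u = s = x \lor y \lor z \lor u$.

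I do not expect a genuine obstacle here: once distributivity is used to collapse $(u \land x) \lor (u \land y) \lor (u \land z)$ into $u \land (x \lor y \lor z)$ and one notes $m(x,y,z) \le x \lor y \lor z$, the entire equivalence reduces to the remark that, under the hypothesis $p(x,y,z,u) = x \lor y \lor z \lor u$, the element $u$ is automatically below $x \lor y \lor z$. The only place requiring (mild) care is making sure distributivity is invoked correctly when rewriting the six-term join for $p$; everything else is order-theoretic bookkeeping.
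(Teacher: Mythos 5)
Your proof is correct and follows essentially the same route as the paper: both rewrite $p$ via distributivity as $m(x,y,z)\lor\bigl(u\land(x\lor y\lor z)\bigr)$, use $p\le x\lor y\lor z$, and deduce $u\le x\lor y\lor z$ to conclude each direction. The only cosmetic difference is that in the backward direction the paper expands $m\lor(u\land(x\lor y\lor z))$ distributively as $(m\lor u)\land(m\lor(x\lor y\lor z))$, whereas you substitute $u\land(x\lor y\lor z)=u$ directly; both are fine.
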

\begin{proof}
	 Let $x,y,z, u \in L$ be arbitrary elements. Let us denote $m(x,y,z)$ simply by $m$ and $p(x,y,z, u)$ by $p$ for better readability. 
	 
	 First let us suppose that $p=x \lor y \lor z \lor u$. It is easy to see that $p \leq x \lor y \lor z$ always holds (since every meet in $p$ is less than or equal to $x \lor y 	\lor z$). Since $p=x \lor y \lor z \lor u$, we get that  $p \leq x \lor y \lor z \leq x \lor y \lor z \lor u = p$, hence $p=x \lor y \lor z$. Observe that by the 			distributivity of $\L$, $p$ can be rewritten as $p=m \lor (u \land (x \lor y \lor z))$, and from the previous 
	chain of inequalities we can see that $u \leq x \lor y \lor z$, therefore we have $p=m \lor u$.  Thus $ m \lor u = p = x \lor y \lor z$.
	 
	  For the other direction suppose that $m \lor u = x \lor y \lor z$. Using that $\L$ is distributive, we get that $p = m \lor (u \land (x \lor y \lor 				z))= (m \lor u) \land (m \lor (x \lor y \lor z))$, and by the assumption this implies that $p=x \lor y \lor z$. Our assumption also implies that $u \leq x \lor y 			\lor z$, therefore we have $p = x \lor y \lor z \lor u$.
\end{proof}

\begin{theorem} \label{halo2}
	Let $\L=(L, \land, \lor)$ be a finite distributive lattice. If Property~$\sdc$ holds for $C=[\land, \lor]$, then $\L$ is a Boolean lattice.
\end{theorem}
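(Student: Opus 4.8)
The plan is to prove the contrapositive: if $\L$ is a finite distributive lattice that is \emph{not} Boolean, then Property~$\sdc$ fails for $C=[\land,\lor]$. I would do this by producing a single ternary relation $T\subseteq L^{3}$ that is closed under $C^{\ast}$ but is not $\Sol(\E)$ for any system $\E$ of $C$-equations. The relation is built to say that the median has a relative complement. Let $q$ be the order-dual of the term $p$ from Lemma~\ref{pq}, namely
\[
q(x,y,z,u)=(x\lor y)\land(x\lor z)\land(y\lor z)\land(u\lor x)\land(u\lor y)\land(u\lor z);
\]
since $m$ is self-dual over a distributive lattice, the dual of Lemma~\ref{pq} states that $q(x,y,z,u)=x\land y\land z\land u\iff m(x,y,z)\land u=x\land y\land z$. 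I then set
\[
T=\bigl\{(x,y,z)\in L^{3}\bigm|\exists u\in L\colon p(x,y,z,u)=x\lor y\lor z\lor u \ \with\ q(x,y,z,u)=x\land y\land z\land u\bigr\}.
\]
By Lemma~\ref{pq} and its dual, $(x,y,z)\in T$ iff $m(x,y,z)$ has a complement relative to the interval $[x\land y\land z,\,x\lor y\lor z]$. Each of the two equations defining $T$ cuts out a relation in $C^{\circ}$, so $T$ — the existential projection of their intersection in the coordinate $u$ — lies in $\langle C^{\circ}\rangle_{\exists}$, hence $T$ is closed under $C^{\ast}$ by Lemma~\ref{nec}. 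This takes care of condition (b) of Property~$\sdc$.

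For condition (a) I would show $T\neq\Sol(\Eqc(T))$ and invoke Remark~\ref{soleqt}. Since $\L$ is finite, distributive and not Boolean, it is not complemented, so there is an element $a\in L$ with no complement; then $a\notin\{0_{\L},1_{\L}\}$. First, $(a,0_{\L},1_{\L})\notin T$: here $m(a,0_{\L},1_{\L})=a$, $x\land y\land z=0_{\L}$ and $x\lor y\lor z=1_{\L}$, so membership in $T$ would yield a $u$ with $a\land u=0_{\L}$ and $a\lor u=1_{\L}$, i.e.\ a complement of $a$. On the other hand $(0_{\L},0_{\L},1_{\L})\in T$ (witness $u=1_{\L}$) and $(1_{\L},0_{\L},1_{\L})\in T$ (witness $u=0_{\L}$).

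The key step — the one I expect to require the most care — is a rigidity statement for ternary lattice terms evaluated along $(x,0_{\L},1_{\L})$: for every $f\in C^{(3)}$ the unary function $x\mapsto f(x,0_{\L},1_{\L})$ is one of $x\mapsto 0_{\L}$, $x\mapsto 1_{\L}$, $x\mapsto x$. Indeed it is given by a lattice term in the single variable $x$ over the constants $0_{\L}$ and $1_{\L}$, and reading that term from the bottom up, using $0_{\L}\lor t=t$, $1_{\L}\land t=t$, $0_{\L}\land t=0_{\L}$ and $1_{\L}\lor t=1_{\L}$, every subterm already represents one of these three functions. Consequently, since any two distinct functions among $x\mapsto 0_{\L}$, $x\mapsto 1_{\L}$, $x\mapsto x$ already differ at $0_{\L}$ or at $1_{\L}$, if $f,g\in C^{(3)}$ agree at $(0_{\L},0_{\L},1_{\L})$ and at $(1_{\L},0_{\L},1_{\L})$ then $x\mapsto f(x,0_{\L},1_{\L})$ and $x\mapsto g(x,0_{\L},1_{\L})$ coincide, so in particular $f(a,0_{\L},1_{\L})=g(a,0_{\L},1_{\L})$. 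Since $(0_{\L},0_{\L},1_{\L}),(1_{\L},0_{\L},1_{\L})\in T$, every $C$-equation in $\Eqc(T)$ is thus satisfied at $(a,0_{\L},1_{\L})$, i.e.\ $(a,0_{\L},1_{\L})\in\Sol(\Eqc(T))$. Hence $\Sol(\Eqc(T))\supsetneq T$, so $T$ is not the solution set of any system of $C$-equations while still being closed under $C^{\ast}$. Therefore Property~$\sdc$ fails for $C$, which — given that $\L$ is distributive — is exactly the desired contrapositive.

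The only real obstacle I anticipate is getting the relation $T$ exactly right: it must be primitive positive over $C^{\circ}$ so as to be $C^{\ast}$-closed, it must have one coordinate triple whose membership encodes ``$a$ is complemented'', and it must nonetheless be loose enough that the $C$-equations it does satisfy cannot pin that triple down. The third requirement is delivered precisely by the rigidity observation above, which makes the two ``trivial'' members $(0_{\L},0_{\L},1_{\L})$ and $(1_{\L},0_{\L},1_{\L})$ of $T$ enough to force agreement of all of $\Eqc(T)$ at $(a,0_{\L},1_{\L})$.
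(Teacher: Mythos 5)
Your proof is correct, and it is built on exactly the same relation $T$ as the paper's: both proofs use Lemma~\ref{pq} together with its dual to translate the two defining equations into $m\lor u=x\lor y\lor z$ and $m\land u=x\land y\land z$, both obtain $C^\ast$-closedness of $T$ from Lemma~\ref{nec}, and both conclude through the Galois-closure criterion of Remark~\ref{soleqt}. Where you genuinely diverge is the endgame. The paper argues directly: it embeds $\L$ into a Boolean lattice via Birkhoff's representation theorem, identifies the only possible witness $u$ as the symmetric difference $x\vartriangle y\vartriangle z$, invokes Gr\"atzer's theorem \cite{GG} (ternary term functions of a distributive lattice are determined by their restrictions to $\{0_\L,1_\L\}^3\subseteq T$) to see that $\Eqc(T)$ contains only trivial equations, and then, assuming Property~$\sdc$, concludes $T=L^3$, hence $L$ is closed under $x\vartriangle y\vartriangle z$ and in particular under $x\mapsto x\vartriangle 0_\L\vartriangle 1_\L=x'$. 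You instead prove the contrapositive: you pick one non-complemented element $a$, observe directly from Lemma~\ref{pq} and its dual (since $m(a,0_\L,1_\L)=a$) that $(a,0_\L,1_\L)\in T$ would mean precisely that $a$ has a complement, and you replace Gr\"atzer's theorem by the elementary rigidity fact that every ternary lattice term restricted along $(x,0_\L,1_\L)$ is one of the functions $0_\L$, $1_\L$, $x$ (the set of these three unary functions is closed under pointwise meet and join, and any two of them already differ at $0_\L$ or $1_\L$), so the two members $(0_\L,0_\L,1_\L)$ and $(1_\L,0_\L,1_\L)$ of $T$ force every equation in $\Eqc(T)$ to hold at $(a,0_\L,1_\L)$, giving $(a,0_\L,1_\L)\in\Sol(\Eqc(T))\setminus T$. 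Your route needs neither the Birkhoff/Stone embedding nor \cite{GG}, so it is more elementary and self-contained; the paper's route buys the sharper structural description $(x,y,z)\in T\iff x\vartriangle y\vartriangle z\in L$ (the witness is unique), which makes the link between $T$, symmetric difference and complementation explicit. Both arguments are complete and correct.
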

\begin{proof}
	Let $\L=(L, \land, \lor)$ be a finite distributive lattice and let $C=[\land, \lor] \leq \O_L$.
	Since $\L$ is distributive, by Birkhoff's representation theorem $\L$ can be embedded into a Boolean lattice $\B$, hence we may suppose without loss of 			generality that $\L$ is already a sublattice of $\B$. We can also assume that $0_\L=0_\B$ and $1_\L=1_\B$. Let us denote the complement of an 				element $x \in \B$ by $x'$. We define the dual of $p=p(x,y,z,u)$ (from Lemma~\ref{pq}) as $p^d=q=q(x,y,z,u)$, i.e.,
\[
q(x,y,z,u)= (x \lor y) \land (x \lor z) \land (y \lor z) \land (u \lor x) \land (u \lor y) \land (u \lor z).
\]
	 Let $T$ be the following set:
	\begin{align*}
		T = \big\{(x,y,z) \in L^3 \bigm|  \exists u \in L\colon & p(x,y,z,u) = x \lor y \lor z \lor u \text{ and } \\
		& q(x,y,z,u) = x \land y \land z \land u \big\}.
	\end{align*}
	By Lemma~\ref{nec}, the set $T$ is closed under $C^\ast$. Let $(x, y, z) \in T$ be arbitrary with an element $u \in L$ witnessing that $(x,y,z) \in T$.  			From Lemma~\ref{pq} it follows that $p(x,y,z,u) = x \lor y \lor z \lor u$ if and only if $m \lor u = x \lor y \lor z$. Meeting both sides of the latter equality 			by $m'$, we get
	\begin{equation} \label{eq1}
		u \land m'= (m \land m') \lor (u \land m')= (m \lor u) \land m' = (x \lor y \lor z) \land m'.
	\end{equation}
	By the dual of Lemma~\ref{pq}, we know that $q(x,y,z,u) = x \land y \land z \land u$ if and only if $m \land u = x \land y \land z$. Then joining the last 			equality and \eqref{eq1}, we get that 
	\begin{align*}
	 u &= u \land 1_\L = u \land (m' \lor m) = (u \land m') \lor (u \land m)  \\ 
	& = ((x \lor y \lor z) \land m') \lor (x \land y \land z).
\end{align*}
It is not hard to derive from the defining identities of Boolean algebras that the latter formula is in fact the symmetric difference $x \vartriangle y \vartriangle z$ in $\B$. Alternatively, using Stone's representation theorem for Boolean algebras, we may assume that $x$, $y$ and $z$ are sets, and that the operations $\land, \lor, '$ are the set-theoretic intersection, union and complementation. Then $m$ corresponds to the set of elements that belong to at least two of the sets $x$, $y$ and $z$. Thus $(x \lor y \lor z) \land m'$ consists of those elements that belong to exactly one of $x$, $y$ and $z$, and $((x \lor y \lor z) \land m') \lor (x \land y \land z)$ contains those elements that belong to one or three of the sets $x$, $y$ and $z$, and this is indeed $x \vartriangle y \vartriangle z$ in $\B$.

	We have proved that the element $u$ witnessing that $(x,y,z)\in T$ can only be $x \vartriangle y \vartriangle z$:
	\begin{equation} \label{eq2}
		\forall x,y,z\in L\colon (x,y,z) \in T \iff \exists u \in L\colon u=x \vartriangle y \vartriangle z \iff x \vartriangle y \vartriangle z \in L.
	\end{equation}
	
	It is easy to see that $\{0_\L,1_\L\}^3 \subseteq T$, and using the main theorem of \cite{GG}, we get that if $(f, g) \in \Eq(T)$, then $f=g$ must hold. 			(In our case this theorem says that every term function of $\L$ is uniquely determined by its restriction to $\{0,1\}^3$.) Therefore only trivial equations 			can appear in $\Eq(T)$, hence $T=L^3$. Then \eqref{eq2} implies that $\L$ is closed under the ternary operation $x \vartriangle y \vartriangle z$. In particular, for any $x \in L$ we have 			$x \vartriangle 0 \vartriangle 1 = x' \in 	L$, which means that $\L$ is a Boolean lattice.
\end{proof}

We will need the following lemmas for the proof of Theorem~\ref{halo5}, which states that Boolean lattices have Property~$\sdc$. This will complete the determination of lattices with Property~$\sdc$.

\begin{lemma} \label{halo3}
	Let $\L=(L, \land, \lor)$ be a finite distributive lattice and let $C=[\land, \lor]\leq \O_L$. Then every system of $C$-equations is equivalent to a system of inequalities $\{p_1 \leq q_1, \dots, p_l\leq q_l\}$, where $p_i \in [\land]$ and $q_i \in [\lor]$ ($i=1,\dots, l$). 
\end{lemma}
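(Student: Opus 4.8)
The plan is to reduce the problem one equation at a time, passing through disjunctive and conjunctive normal forms, so that every equation becomes a finite conjunction of inequalities between a meet-term and a join-term.

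First I would note that it suffices to treat a single $C$-equation $(f,g)$: a system $\E=\{(f_1,g_1),\dots,(f_t,g_t)\}$ is equivalent to the union of the inequality systems produced for the individual pairs $(f_i,g_i)$. Moreover, for any substitution the equality $f=g$ holds if and only if $f\le g$ and $g\le f$ both hold, so it is enough to convert a single inequality $f\le g$ with $f,g\in C^{(n)}$ into the desired shape.

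The key structural fact, and the only place where distributivity is used, is that over a distributive lattice every term function has both a disjunctive and a conjunctive normal form. By induction on the structure of a lattice term, using the distributive laws to push meets inside joins (and dually joins inside meets), one writes $f=m_1\lor\dots\lor m_k$ where each $m_i$ is a nonempty meet of variables, hence $m_i\in[\land]$; dually $g=c_1\land\dots\land c_l$ where each $c_j$ is a nonempty join of variables, hence $c_j\in[\lor]$. The rest of the argument is valid in an arbitrary lattice.

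Finally I would assemble the inequalities using the universal properties of join and meet: since $m_1\lor\dots\lor m_k$ is the least upper bound of $m_1,\dots,m_k$, the inequality $f\le g$ is (pointwise) equivalent to the conjunction of $m_i\le g$ for $i=1,\dots,k$; and since $c_1\land\dots\land c_l$ is the greatest lower bound of $c_1,\dots,c_l$, each $m_i\le g$ is equivalent to the conjunction of $m_i\le c_j$ for $j=1,\dots,l$. Hence $f\le g$ is equivalent to the system $\{\,m_i\le c_j \mid 1\le i\le k,\ 1\le j\le l\,\}$ of inequalities of the required form; doing this for $f\le g$ and $g\le f$, and for every equation of $\E$, yields the claimed system $\{p_1\le q_1,\dots,p_l\le q_l\}$ with $p_i\in[\land]$ and $q_i\in[\lor]$. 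I do not expect a genuine obstacle beyond the bookkeeping of the normal-form induction; the one point to be careful about is keeping all meets and joins nonempty, so that the resulting terms really lie in $[\land]$ and $[\lor]$ without invoking lattice constants.
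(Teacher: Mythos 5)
Your proposal is correct and follows essentially the same route as the paper's proof: replace each equation by the two inequalities, bring the left-hand side into disjunctive normal form and the right-hand side into conjunctive normal form (this is where distributivity is used), and then split each inequality into the pairwise inequalities between the constituent meets and joins. Your extra care about the normal-form induction and the nonemptiness of the meets and joins is fine but adds nothing beyond what the paper's argument already contains implicitly.
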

\begin{proof}
	Let $\L=(L, \land, \lor)$ be a finite distributive lattice, let $C=[\land, \lor]\leq \O_L$ and let 
\[
\E=\{f_1=g_1, \dots, f_t=g_t\}
\]
	be a system of $C$-equations. For arbitrary $a, b \in L$ we have $a=b$ if and only if $a \leq b$ and $b \leq a$, therefore $\E$ is equivalent to the 			system of inequalities 
\[
\E'=\{ f_1 \leq g_1, g_1\leq f_1, \dots, f_t \leq g_t, g_t \leq f_t\}.
\]
	Denote the disjunctive normal forms of the left hand sides of the inequalities in $\E'$ as $DNF_j$, and denote the conjunctive normal forms of the right			hand sides of the inequalities in $\E'$ as $CNF_j$  ($j =1, \dots, 2t$). Then $\E'$ is equivalent to the system of inequalities 
\[
\{DNF_1 \leq CNF_1, \dots, DNF_{2t} \leq CNF_{2t}\}.
\]
	Each $DNF_j$ is a join of some meets, and each $CNF_j$ is a meet of some joins. Therefore, for every $j$ the inequality $DNF_j \leq CNF_j$ holds if and 	only if every meet in $DNF_j$ is less than or equal to every join in $CNF_j$. This means that there exists a system of inequalities $\{p_1 \leq q_1, \dots, p_l		\leq q_l\}$ equivalent to $\E$, such that $p_i \in [\land]$ and $q_i \in [\lor]$ ($i=1,\dots, l$). 
\end{proof}

\begin{lemma} \label{halo4}
	Let $\B=(B, \land, \lor, ')$ be a Boolean algebra. Then for every $a, b, c, d, u \in B$, we have
	\begin{enumerate}[label=(\roman*)]
		\item $a \land u \leq b \iff u \leq a' \lor b$;
		\item $b \leq a \lor u \iff u \geq a' \land b$;
		\item $a \land b' \leq c' \lor d \iff a \land c \leq b \lor d$.
	\end{enumerate}
\end{lemma}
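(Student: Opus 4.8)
The plan is to verify each of the three equivalences using only the standard Boolean algebra identities (complementation, distributivity, De Morgan), since all three are elementary once the right "add a complement / intersect with a complement" trick is applied. For part (i), the forward direction comes from joining $a'$ to both sides of $a \land u \leq b$: since $a' \lor (a \land u) = a' \lor u \geq u$ and $a' \lor b$ is the right-hand side, we get $u \leq a' \lor (a\land u) \leq a' \lor b$; wait — more cleanly, from $a\land u\le b$ we obtain $a'\lor(a\land u)\le a'\lor b$, and the left side equals $(a'\lor a)\land(a'\lor u)=a'\lor u\ge u$. For the converse, from $u \leq a' \lor b$ we meet both sides with $a$ to get $a \land u \leq a \land (a' \lor b) = (a\land a')\lor(a\land b) = a\land b\le b$. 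Part (ii) is simply the order-dual of part (i): apply (i) in the dual Boolean algebra, or directly dualize the argument (replace $\leq$ by $\geq$, $\land$ by $\lor$, and read $a'$ for $a$).

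For part (iii), the cleanest route is to derive it from (i) and (ii), or to prove it directly: starting from $a \land b' \leq c' \lor d$, meet both sides with $b$ and also observe what happens with $c$. Actually the slickest approach is to note that both inequalities are equivalent to a single "straddling" condition. From $a\land b'\le c'\lor d$, meeting both sides with $(b\lor d)'=b'\land d'$ on a suitable rearrangement, or more simply: $a\land b'\le c'\lor d$ is equivalent (using $x\le y\iff x\land y'=0$) to $a\land b'\land (c'\lor d)'=0$, i.e. $a\land b'\land c\land d'=0$; and $a\land c\le b\lor d$ is equivalent to $a\land c\land(b\lor d)'=0$, i.e. $a\land c\land b'\land d'=0$. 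These two conditions $a\land b'\land c\land d'=0$ and $a\land c\land b'\land d'=0$ are literally the same meet, so the equivalence follows. I would present part (iii) using this $x\le y\iff x\land y'=0$ reformulation, as it makes the symmetry transparent and avoids casework.

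I do not anticipate any real obstacle here: the statement is a routine collection of Boolean manipulations, and the only mild care needed is to keep the complementation and De Morgan steps correct when dualizing for (ii) and when expanding $(c'\lor d)'$ and $(b\lor d)'$ in (iii). If a more uniform presentation is desired, all three parts follow from the single observation that in a Boolean algebra $x \leq y$ if and only if $x \land y' = 0$ (equivalently $x' \lor y = 1$), together with De Morgan's laws; this is the version I would write up, since it dispatches (i), (ii), and (iii) with essentially one line of computation each.
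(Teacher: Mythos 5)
Your proof is correct. For parts (i) and (ii) you follow exactly the paper's route: join $a'$ to both sides and distribute for the forward direction, meet with $a$ and distribute for the converse, and obtain (ii) by duality. Where you diverge is part (iii): the paper derives it by reusing the already-proved parts, applying (i) with $u = a \land b'$ to get $a \land b' \leq c' \lor d \iff (c \land a) \land b' \leq d$, and then (ii) with $u = d$ to get $(c \land a) \land b' \leq d \iff c \land a \leq b \lor d$. You instead reduce both inequalities to a common annihilation condition via the equivalence $x \leq y \iff x \land y' = 0$ together with De Morgan, so that each side becomes literally $a \land b' \land c \land d' = 0$. Both arguments are sound and equally elementary; yours makes the symmetry of (iii) in the pairs $(b,c)$ transparent and, as you note, could be used to give a uniform one-line treatment of all three parts, while the paper's version has the modest advantage of needing no auxiliary fact beyond (i) and (ii) themselves (though $x \leq y \iff x \land y' = 0$ is of course standard and you could include its two-line verification for completeness).
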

\begin{proof}
	Let  $a, b, c, d, u \in B$ be arbitrary elements. For the proof of (i) let us first suppose that $a \land u \leq b$. Joining both sides of the inequality by 			$a'$, we get
\[
a' \lor (a \land u) = (a' \lor a) \land (a' \lor u) = 1_\B \land (a' \lor u) = a' \lor u \leq a' \lor b,
\]
	and from this, $u \leq a' \lor b$ follows.
	For the other direction, if $u \leq a' \lor b$ holds, then meeting both sides by $a$, we get that
\[
a \land u \leq a \land (a' \lor b) = (a \land a') \lor (a \land b) = 0_\B \lor (a \land b) = a \land b,
\]
	and from this, $a \land u \leq b$ follows.
	
	The second statement is the dual of (i).
	
	For the proof of (iii) let us use (i) with $u=a \land b'$, and then we get that
\[
a \land b' \leq c' \lor d \iff c \land (a \land b') = (c \land a) \land b' \leq d.
\]
	Then using (ii) with $u=d$, we get
\[
(c \land a) \land b' \leq d \iff c \land a \leq b \lor d,
\]
	which proves (iii).
\end{proof}

Helly's theorem from convex geometry states that if we have $k \ (>d)$ convex sets in $\mathbb{R}^d$, such that any $d+1$ of them have a nonempty intersection, then the intersection of all $k$ sets is nonempty as well. The following lemma says something similar for intervals in lattices (with $d=1$).

\begin{lemma} \label{helly}
	Let $\L=(L, \land, \lor)$ be a lattice, $c_i, d_i \in L$ ($i = 1, \dots, k$). Then we have
\[
\bigcap_{i=1}^k [c_i, d_i] \neq \emptyset \iff \forall i, j \in \{1, \dots, k\}\colon c_i \leq d_j.
\]
\end{lemma}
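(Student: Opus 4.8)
The plan is to prove the two implications separately; the forward direction is essentially immediate, while the backward direction amounts to exhibiting one well-chosen common point of all the intervals.

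For $(\Rightarrow)$, I would take any $x\in\bigcap_{i=1}^{k}[c_i,d_i]$ and note that for arbitrary indices $i,j$ we have $c_i\leq x$ and $x\leq d_j$, hence $c_i\leq d_j$ by transitivity. This disposes of one direction without any computation.

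For $(\Leftarrow)$, assume $c_i\leq d_j$ for all $i,j\in\{1,\dots,k\}$. The key observation is that the finite join $x:=\bigvee_{i=1}^{k}c_i$ (which exists because $k$ is finite) lies in every interval $[c_j,d_j]$: indeed $c_j\leq x$ holds for each $j$ by the definition of the join, and $x\leq d_j$ holds for each $j$ because the hypothesis makes $d_j$ an upper bound of $\{c_1,\dots,c_k\}$, so $d_j$ dominates their least upper bound $x$. Therefore $x\in\bigcap_{i=1}^{k}[c_i,d_i]$, which is thus nonempty. Dually, $\bigwedge_{j=1}^{k}d_j$ works just as well.

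There is no genuine obstacle here; the only point worth a remark is that an individual interval $[c_i,d_i]$ may be empty (when $c_i\not\leq d_i$), but this situation is already accounted for, since taking $j=i$ shows that the right-hand condition fails in that case, so both sides of the equivalence are false. This is precisely why the pairwise condition is stated for all pairs $(i,j)$, including $i=j$.
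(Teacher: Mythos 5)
Your proof is correct and follows essentially the same route as the paper: the paper writes $\bigcap_{i=1}^k [c_i,d_i]=[c_1\lor\dots\lor c_k,\;d_1\land\dots\land d_k]$ and observes this is nonempty iff $c_1\lor\dots\lor c_k\leq d_1\land\dots\land d_k$ iff $c_i\leq d_j$ for all $i,j$, which is exactly your argument with the join $\bigvee_i c_i$ as the common point. Your remark about possibly empty intervals (handled by the case $i=j$) is a harmless addition already implicit in the paper's equivalence.
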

\begin{proof}
	Let $\L=(L, \land, \lor)$ be a lattice, and $c_i, d_i \in L$ ($i = 1, \dots, k$). Then obviously,
\[
\bigcap_{i=1}^k [c_i, d_i] = [c_1 \lor \dots \lor c_k, d_1 \land \dots \land d_k],
\]
	which is nonempty if and only if $c_1 \lor \dots \lor c_k \leq d_1 \land \dots \land d_k$, which holds if and only if $c_i \leq d_j$ for all $i,j \in \{1,...,k\}$.
\end{proof}

The last step in the characterization of finite lattices having Property~$\sdc$ is to show that Boolean lattices do indeed have Property~$\sdc$. For proving this, we will use the equivalence of this property with the quantifier\hyp{}eliminability for pp. formulas over $C^\circ =[\land, \lor]^\circ$ (see Theorem~\ref{equiv}).

\begin{theorem} \label{halo5}
	If $\L=(L, \land, \lor)$ is a finite Boolean lattice, then Property~$\sdc$ holds for $C=[\land, \lor]$.
\end{theorem}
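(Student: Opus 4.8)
The plan is to establish condition~(iv) of Theorem~\ref{equiv}: every primitive positive formula over $C^{\circ}=[\land,\lor]^{\circ}$ is equivalent to a quantifier-free one. By induction on the number of existentially quantified variables it then suffices to eliminate a single $\exists y$ from a formula whose quantifier-free part has a convenient normal form.

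First I would put the quantifier-free part of the formula into the normal form used in the proof of Lemma~\ref{halo3}. Each atomic subformula is a relation $\Sol(f,g)\in C^{\circ}$ evaluated at some variables, i.e.\ an equation $f(\cdots)=g(\cdots)$; replacing it by the two inequalities $f\le g$ and $g\le f$, passing to disjunctive normal form on the left-hand sides and conjunctive normal form on the right-hand sides (using distributivity of $\L$), and noting that a join of meets of variables lies below a meet of joins of variables exactly when each of those meets lies below each of those joins, one rewrites the whole matrix as a conjunction of inequalities $p\le q$ with $p\in[\land]$ a meet of variables from $\{x_1,\dots,x_n,y_1,\dots,y_m\}$ and $q\in[\lor]$ a join of such variables. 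Each inequality $p\le q$ is again an atomic relation over $C^{\circ}$ (namely $\Sol(p\land q,p)$), and conversely a conjunction of such inequalities is a quantifier-free pp.\ formula over $C^{\circ}$; so it suffices to perform the quantifier elimination while staying within this class of formulas.

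Now I fix the variable $y$ to be eliminated and sort the inequalities of the matrix. Those not containing $y$ are left untouched. An inequality in which $y$ occurs on both sides has the form $y\land(\cdots)\le y\lor(\cdots)$, hence is trivially true and may be dropped. The remaining ones have one of the two forms $y\land m_i\le q_i$ ($i\in I$, with $y$ not occurring in the join $q_i$) or $p_j\le y\lor r_j$ ($j\in J$, with $y$ not occurring in the meet $p_j$), where the $m_i,p_j$ are meets and the $q_i,r_j$ are joins of the remaining variables; an empty $m_i$ is read as $1_{\L}$ and an empty $r_j$ as $0_{\L}$. Regarding $\L$ as a Boolean algebra with complementation $x\mapsto x'$, Lemma~\ref{halo4}(i)--(ii) shows that the admissible values of $y$ form exactly the interval $\bigl[\bigvee_{j\in J}(p_j\land r_j'),\ \bigwedge_{i\in I}(m_i'\lor q_i)\bigr]$ of $\L$; by Lemma~\ref{helly} this interval is nonempty if and only if $p_j\land r_j'\le m_i'\lor q_i$ for all $i\in I$, $j\in J$, and by Lemma~\ref{halo4}(iii) the latter is equivalent to $p_j\land m_i\le r_j\lor q_i$ for all such $i,j$. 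Hence $\exists y\bigl[\bigwedge_{i\in I}(y\land m_i\le q_i)\land\bigwedge_{j\in J}(p_j\le y\lor r_j)\bigr]$ is equivalent to the quantifier-free conjunction $\bigwedge_{i\in I,\,j\in J}(p_j\land m_i\le r_j\lor q_i)$, in which $p_j\land m_i$ is a meet and $r_j\lor q_i$ a join of variables other than $y$. Conjoining this with the inequalities that did not mention $y$ yields a pp.\ formula over $C^{\circ}$ with one fewer quantifier and of the same shape; iterating removes all quantifiers and gives condition~(iv) of Theorem~\ref{equiv}, so $C=[\land,\lor]$ has Property~$\sdc$.

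The main thing to watch is the bookkeeping in the elimination step: the degenerate cases $I=\emptyset$ or $J=\emptyset$ (where one family of constraints on $y$ is absent, and then both the existential statement and the empty conjunction it reduces to are vacuously true), together with the empty-meet/empty-join conventions, should be checked against the displayed equivalence. It is also worth stressing that, although $C$ contains neither constants nor a complementation operation, complements in $\L$ may freely be used to decide when a suitable $y$ exists, since the resulting equivalent formula $\bigwedge_{i,j}(p_j\land m_i\le r_j\lor q_i)$ involves only meets and joins of the original variables and is therefore a genuine pp.\ formula over $C^{\circ}$.
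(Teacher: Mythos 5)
Your proposal is correct and follows essentially the same route as the paper's proof: reduction to quantifier elimination via Theorem~\ref{equiv}, the inequality normal form of Lemma~\ref{halo3}, the case split according to where the quantified variable occurs, translation into interval constraints via Lemma~\ref{halo4}, the Helly-type Lemma~\ref{helly}, and finally Lemma~\ref{halo4}(iii) to remove the complements. The only additions are cosmetic (explicit empty-meet/empty-join conventions and the degenerate cases $I=\emptyset$ or $J=\emptyset$), which the paper handles implicitly.
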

\begin{proof}
	Let $\L=(L, \land, \lor)$ be a finite Boolean lattice, and let $C=[\land, \lor]$. Let us denote the complement of an element $x \in \L$ by $x'$. 
	By Theorem~\ref{equiv}, Property~$\sdc$ holds for $C$ if and only if any pp. formula over $C^\circ$ is equivalent to a quantifier-free pp. formula. Let us consider a pp. formula with a single quantifier:
	\begin{equation}
		\Phi(  x_{1},\dots,x_{n})  =\exists u\bigwith_{j=1}^{t}\rho_{j}\bigl(z_{1}^{(  j)  },\dots,z_{r_{j}}^{(  j)  }\bigr) 				\label{eqpp},
	\end{equation}
	where $\rho_j \in {(C^\circ})^{(r_j)}$, and $z_i^{(j)} \ (j = 1, \dots, t, \text{ and } i=1, \dots, r_j)$ are variables from the set $\{x_1, \dots, x_n, u\}$. 
	We will show that $\Phi$ is equivalent to a quantifier-free pp. formula, and thus (by iterating this argument) every pp. formula is equivalent to a quantifier-free pp. formula.
	By Lemma~\ref{halo3}, we can rewrite $\Phi$ to an equivalent formula 
\[
\exists u \bigwith_{i=1}^l (p_i \leq q_i),
\]
	where $p_i \in [\land]$ and $q_i \in [\lor]$ ($i = 1, \dots, l$).
	Let $a_i$ denote the meet of all variables from $\{x_1, \dots, x_n\}$ appearing in $p_i$, and let $b_i$ denote the join of all variables from $\{x_1, 			\dots, x_n\}$ appearing in $q_i$. Then we can distinguish four cases for the $i$-th inequality:
	\begin{enumerate}
	\setcounter{enumi}{-1}
		\item If $u$ does not appear in the inequality, then the inequality is of the form $a_i \leq b_i$.
		\item If $u$ appears only on the left hand side of the inequality, then the inequality is of the form $a_i \land u \leq b_i$.
		\item If $u$ appears only on the right hand side of the inequality, then the inequality is of the form $a_i \leq b_i \lor u$.
		\item If $u$ appears on both sides of the inequality, then the inequality is of the form $a_i \land u \leq b_i \lor u$, which always holds, since $a_i 					\land u \leq u \leq b_i \lor u$.
	\end{enumerate}
	
	Let $I_j$ denote the following set of indices:  
\[
I_j =\{i \mid \text{the inequality } p_i \leq q_i \text{ belongs to case (j)}\}
\]
	for $j=0, 1, 2, 3$. The only cases we have to investigate are case (1) and case (2) (since $u$ does not appear in case (0) and in case (3) there are only trivial inequalities).
	By Lemma~\ref{halo4}, 
	\begin{align*}
		& \text{for } i \in I_1 \text{ we have } a_i \land u \leq b_i \iff u \leq a_i' \lor b_i \iff u \in [0_\L, a_i' \lor b_i] =: [c_i, d_i]; \\
		&  \text{for } i \in I_2 \text{ we have } a_i \leq b_i \lor u \iff u \geq b_i' \land a_i \iff u \in [a_i \land b_i', 1_\L]=:[c_i, d_i].
	\end{align*}
	Then we have 
\[
\exists u \forall i \in I_1 \cup I_2\colon p_i \leq q_i \iff \bigcap_{i \in I_1 \cup I_2} [c_i, d_i] \neq \emptyset \iff \forall i, j \in I_1 \cup I_2\colon c_i \leq d_j
\]
	by Lemma~\ref{helly}. Since $u$ does not appear in the condition above, in principle, the quantifier has been eliminated. However, our formula still involves complements. Therefore, we use Lemma~\ref{halo4} to rewrite the formula. The only non-trivial case is if $c_i \neq 0_\L$ and $d_j \neq 1_\L$, that is, $c_i = a_i \land b_i'$ and $d_j= a_j' \lor b_j$ ($i \in I_2, j \in I_1$). In this case $c_i \leq d_j$ if and only if $a_i \land a_j \leq b_i \lor b_j$ by Lemma~\ref{halo4}.
	
	Summarizing the observations above, we have
	\begin{align*}
		\Phi(x_1, \dots, x_n) & \equiv \exists u \bigwith_{i=1}^l (p_i \leq q_i) \equiv \bigwith_{i \in I_0}(a_i \leq b_i) \bigwith_{i,j \in I_1 \cup I_2}(c_i \leq d_j) \\ & \equiv  \bigwith_{i \in I_0}(a_i \leq b_i) \bigwith_{i \in I_2,  j \in I_1}(a_i 	\land a_j \leq b_i \lor b_j), 
	\end{align*}
	which is equivalent to a quantifier-free pp. formula over $[\land, \lor]$ (since for all $x,y \in L$, we have $x \leq y$ if and only if $x = x \land y$).
\end{proof}

We can summarize the results of this section in the following theorem, which is a corollary of Lemma~\ref{halo1}, Theorem~\ref{halo2} and Theorem~\ref{halo5}.

\begin{theorem} \label{fohalo}
	A finite lattice has Property~$\sdc$ if and only if it is a Boolean lattice.
\end{theorem}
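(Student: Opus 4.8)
The plan is to observe that \textbf{Theorem~\ref{fohalo}} is simply the assembly of the three preceding results of this section, once we recall that a finite lattice $\L=(L,\land,\lor)$ ``has Property~$\sdc$'' means precisely that the clone $C=[\land,\lor]$ of its term operations has Property~$\sdc$. Since the statement is an equivalence, I would prove the two implications separately: necessity via Lemma~\ref{halo1} and Theorem~\ref{halo2}, and sufficiency via Theorem~\ref{halo5}.

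For necessity, suppose $\L$ has Property~$\sdc$. First apply Lemma~\ref{halo1}: since Property~$\sdc$ holds for $C=[\land,\lor]$, the lattice $\L$ must be distributive. Thus $\L$ is now a \emph{distributive} finite lattice for which Property~$\sdc$ still holds, so Theorem~\ref{halo2} applies and yields that $\L$ is a Boolean lattice. Chaining these two steps gives the implication ``$\L$ has Property~$\sdc$ $\implies$ $\L$ is a Boolean lattice.''

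For sufficiency, suppose $\L$ is a finite Boolean lattice. Then Theorem~\ref{halo5} states directly that Property~$\sdc$ holds for $C=[\land,\lor]$, i.e., $\L$ has Property~$\sdc$. Combining the two directions completes the proof.

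Since the substantive work has already been carried out in Lemmas~\ref{halo1}--\ref{helly} and Theorems~\ref{halo2} and~\ref{halo5}, no real obstacle remains in the final assembly. If one were proving the theorem from scratch, the main difficulty would lie in the sufficiency direction: establishing, via Theorem~\ref{equiv}, that every primitive positive formula over $[\land,\lor]^\circ$ admits quantifier elimination (Theorem~\ref{halo5}), for which the Helly-type Lemma~\ref{helly} and the Boolean manipulations of Lemma~\ref{halo4} are exactly the needed tools. On the necessity side, the crucial idea is to exhibit, for a non-Boolean $\L$, an explicit $C^\ast$-closed set $T$ (the sets appearing in the proofs of Lemma~\ref{halo1} and Theorem~\ref{halo2}) that satisfies only trivial $C$-equations yet is a proper subset of $L^2$ or $L^3$.
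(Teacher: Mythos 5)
Your proof is correct and follows exactly the paper's own route: Theorem~\ref{fohalo} is stated there as a direct corollary of Lemma~\ref{halo1}, Theorem~\ref{halo2} and Theorem~\ref{halo5}, assembled precisely as you do. Nothing is missing.
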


	This means that for any finite lattice $\L=(L, \land, \lor)$, solution sets of systems of equations over $\L$ can be characterized (via closure 	conditions) if and only if $\L$ is a Boolean lattice.

\section{Systems of equations over semilattices}\label{semil}

	Similarly to Section~\ref{latt}, in this section $\M=(M, \land)$ denotes a finite semilattice with meet operation $\land$ and least element $0_\M$.

\begin{lemma} \label{semil1}
	Let $\M=(M, \land)$ be a finite semilattice. If $\M$ has no greatest element, then Property~$\sdc$ does not hold for $C=[\land]$.
\end{lemma}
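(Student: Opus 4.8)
The plan is to exhibit a centralizer-closed set that cannot be a solution set. Since $\M$ has no greatest element, there exist maximal elements $m_1 \neq m_2$ of $M$; set $b = m_1 \land m_2$, which is strictly below both. I would consider the set
\[
T = \bigl\{(x,y) \in M^2 \bigm| \exists u \in M\colon u \land x = u \land y\bigr\} \subseteq M^2.
\]
By Lemma~\ref{nec}, $T$ is closed under $C^\ast$, because $T = \rel(\Phi)$ for the pp.\ formula $\Phi(x,y) = \exists u\,(u \land x = u \land y)$ over $C^\circ$, so $T \in \langle C^\circ\rangle_\exists$. Note $T = M^2$: indeed $u = 0_\M$ witnesses $(x,y) \in T$ for every pair, since $0_\M \land x = 0_\M = 0_\M \land y$. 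So the point is not that $T$ is a proper subset; rather, I expect to use a slightly more restrictive set, or to argue directly about which equations $T$ satisfies. Let me instead take
\[
T = \bigl\{(x,y) \in M^2 \bigm| \exists u \in M\colon u \land x = u \land y \text{ and } u \geq x, u \geq y \text{ fails}\bigr\}
\]
— no; the clean approach is the one used in Lemma~\ref{halo1}: take $T$ to be the full set $M^2$ is useless, so one wants a pp.-definable set that equals $M^2$ only if $\M$ has a top. The right choice, mirroring Lemma~\ref{halo1}, is
\[
T = \bigl\{(x,y) \in M^2 \bigm| \exists u \in M\colon u \land x = x \text{ and } u \land y = y\bigr\},
\]
i.e.\ $(x,y) \in T$ iff $x$ and $y$ have a common upper bound in $\M$. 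This is pp.-definable over $C^\circ$ (the equations $u \land x = x$, $u \land y = y$ are $C$-equations in variables $x,y,u$), hence closed under $C^\ast$ by Lemma~\ref{nec}.

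Next I would show $T$ is not the solution set of any system of $C$-equations. The $2$-variable equations over $C = [\land]$ are, up to equivalence, just $x = x\land y$ (i.e.\ $x \leq y$), $y = x \land y$ (i.e.\ $y \leq x$), and $x = y$ (equivalently $x\land y = x$ and $x \land y = y$). For each of these I exhibit a pair in $T$ violating it: since $m_1, m_2$ are distinct maximal elements with $b = m_1 \land m_2 < m_1, m_2$, and $\land$ on $M$ gives a common upper bound structure — wait, we need common \emph{upper} bounds. Here the subtlety: in a meet-semilattice common lower bounds always exist ($x \land y$), but common upper bounds need not. So $T$ is exactly the set of pairs with an upper bound, and $T = M^2$ iff $\M$ has a greatest element. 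Assuming $\M$ has none, $T \subsetneq M^2$; I must find two elements $a,b$ with a common upper bound but incomparable, so that all of $x\leq y$, $y \leq x$, $x = y$ fail on $(a,b) \in T$ — then $\Eq_C(T)$ contains only trivial equations, forcing $\Sol(\Eq_C(T)) = M^2 \supsetneq T$, contradicting Remark~\ref{soleqt}. To get such $a,b$: take any element $c$ that is not maximal (exists, e.g.\ $0_\M$ if $|M|>1$; if $M$ has a unique maximal element it is the top, excluded, so there are $\geq 2$ maximal elements $m_1,m_2$) and two distinct elements $a, b$ both below a common element. If $M$ has $\geq 2$ maximal elements $m_1 \neq m_2$, pick any $a < m_1$ incomparable to some $b < m_1$; such exist if $|M|$ is large enough, but not in general (a chain below $m_1$). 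The robust choice: use $a = 0_\M$-type degeneracy is the obstacle.

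The cleanest route, and the one I would commit to: let $m_1, m_2$ be two distinct maximal elements and argue with the pair $(m_1, m_2)$ directly together with a third auxiliary argument, OR — better — mimic Lemma~\ref{halo1}'s counterexample table. I would take $a, b$ to be two distinct maximal elements $m_1 \neq m_2$. Then $(m_1,m_2) \notin T$ (no common upper bound, since any upper bound of $m_1$ equals $m_1$ by maximality, similarly $m_2$, and $m_1 \neq m_2$). So $T \neq M^2$. Now I claim every non-trivial $C$-equation in $2$ variables that holds on all of $T$ leads to a contradiction: for $x \leq y$, I need $(a,b) \in T$ with $a \not\leq b$; for $y\leq x$, a pair with $b \not\leq a$; for $x = y$, a pair with $a \neq b$. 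Take $a = 0_\M$, $b = m_1$: then $u = m_1$ is a common upper bound so $(0_\M, m_1) \in T$, and $0_\M \neq m_1$, $m_1 \not\leq 0_\M$, so this kills $y \leq x$ and $x = y$. By symmetry $(m_1, 0_\M) \in T$ kills $x \leq y$. Hence $\Eq_C(T)$ has only trivial $2$-variable equations, so $\Sol(\Eq_C(T)) = M^2 \neq T$, contradicting that $T$ is Galois-closed (Remark~\ref{soleqt}). \textbf{Main obstacle:} verifying that $[\land]^{(2)}$ really has only the three non-trivial equation shapes listed (an easy induction on term structure: every binary $\land$-term is a non-empty meet of a subset of $\{x,y\}$, so equals $x$, $y$, or $x\land y$), and confirming $T$ is pp.-definable so Lemma~\ref{nec} applies — both are routine but must be stated. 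The conceptual point is just: $\M$ having no top means $T \subsetneq M^2$ while $\Eq_C(T)$ is forced to be trivial.
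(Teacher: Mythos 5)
Your final committed argument is correct and is essentially the paper's own proof: the same set $T$ of pairs with a common upper bound, the same counterexample pairs $(m_1,0_\M)$ and $(0_\M,m_1)$ ruling out the three nontrivial binary $[\land]$-equations, and the same contradiction via two distinct maximal elements having no common upper bound. The initial false starts are harmless, since the argument you settle on (pp.-definability plus Lemma~\ref{nec}, then Remark~\ref{soleqt}) is exactly the published one.
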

\begin{proof}
	Let $\M=(M, \land)$ be a finite semilattice with no greatest element, and let $C=[\land] \leq \O_M$. The set
	\begin{align*} 
		T & = \{(x,y) \mid \exists u \in L\colon x \land u = x \text{ and } y \land u = y \} = \\
		& = \{(x,y) \mid \exists u \in L\colon x \leq u \text{ and } y \leq u \}\subseteq M^2
	\end{align*}
	is closed under $C^\ast$ by Lemma~\ref{nec}. Similarly to Lemma~\ref{halo1}, we will prove that $T$ is not the solution set of any system of equations over $C$. 		Suppose that there exists a system of $C$-equations $\E$ such that $T=\Sol(\E)$. There are only three nontrivial 2-variable equations over $C$:
\[
 x=y,\quad x \land y = x,\quad x \land y = y.
\]
	 As in Lemma~\ref{halo1}, we prove that none of these equations can appear in $\E$ by presenting counterexamples to them (see Table~\ref{tab1}). Note that since $\M$ is finite and it has no greatest element, there exist maximal elements $a\neq b$ in $\M$. Then we have that only trivial equations can appear in $\E$, thus $T=M^2$. But this is a contradiction, since $(a, b) \notin T$.
\begin{table} \renewcommand{\arraystretch}{1.15}
	 \begin{tabular} 
			[c]{c|ccc}
			$=$ & $x$ & $y$ & $x \land y$\\\hline
			$x$ & $-$ & $(a,0_\M)$ & $(a,0_\M)$\\
			$y$ & $-$ & $-$ & $(0_\M,a)$\\
			$x \land y$ & $-$ & $-$ & $-$\\
	\end{tabular} \caption{Counterexamples showing that these equations do not belong to $\E$.} \label{tab1}
\end{table}
\end{proof}

	If a finite semilattice $\M=(M, \land)$ has a greatest element, then for all $(a, b) \in M^2$, the set $H=\{x \in M \mid a \leq x \text{ and } b \leq x\}$ is 		not empty. Since $\M$ is a finite semilattice, it follows that $\bigwedge H$ exists for all $(a, b) \in M^2$. This means that we can define a join 				operation $\lor$ on $M$, such that $\L=(L, \land, \lor)$ is a lattice (with $L=M$). Therefore, from now on it suffices to investigate lattices (but the clone 			we use for the equations is still $C=[\land]$).

The following theorem shows that Property~$\sdc$ does not hold for non-distributive lattices (regarded as semilattices), i.e., solution sets of systems of equations over a non-distributive lattice (as a semilattice) can not be characterized via closure conditions.

\begin{remark} \label{Gratzer}
 A meet semilattice $\M$ is \emph{distributive} if for any $a, b_0, b_1 \in \M$, the inequality $a \geq b_0 \land b_1$ implies that there exist $a_0, a_1 \in \M$ such that $a_0 \geq b_0$, $a_1 \geq b_1$ and $a=a_0 \land a_1$ (see Section 5.1 in Chapter II of \cite{GG2}).  From Lemma~184 of \cite{GG2} it follows that a finite semilattice is distributive if and only if it is a semilattice reduct of a distributive lattice.
\end{remark}

\begin{theorem} \label{semil2}
	Let $\L=(L, \land, \lor)$ be a finite lattice. If $\L$ is not distributive, then Property~$\sdc$ does not hold for $C=[\land]$.
\end{theorem}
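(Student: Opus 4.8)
The plan is to mimic the strategy of Lemma~\ref{halo1} and Theorem~\ref{halo2}: exhibit a $C^\ast$-closed set $T$ (which by Lemma~\ref{nec} is automatic once $T$ is defined by an existentially quantified conjunction of $C^\circ$-relations) that cannot be a $\Sol(\E)$ for any system of $[\land]$-equations, forcing $\L$ to be distributive. Since $\L$ is a lattice only the meet operation is in $C=[\land]$, so $n$-ary $C$-terms are just meets of (subsets of) the variables; the only nontrivial equations over $C$ in few variables are of the form $x_{i_1}\land\dots\land x_{i_k}=x_{j_1}\land\dots\land x_{j_l}$. The obstacle is that with so few equations available, almost any $T$ containing enough ``generic'' tuples will satisfy only trivial equations, so the real work is (a) choosing $T$ so that it is genuinely a proper subset of $L^3$ (or $L^2$), and (b) using non-distributivity — i.e.\ the presence of an $N_5$ or $M_3$ sublattice via Birkhoff's theorem — to produce the witnessing tuples that kill every candidate nontrivial equation, while ruling out membership of some specific tuple.

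Concretely, I would first reduce to a sublattice isomorphic to $N_5$ or $M_3$: if $\L$ is non-distributive, by Birkhoff's theorem it has such a sublattice $S$. I would pick the set
\[
T=\bigl\{(x,y,z)\in L^3 \bigm| \exists u\in L\colon x\land u=x\land y\land z,\ y\land u=x\land y\land z,\ z\land u=x\land y\land z \text{ and } \dots\bigr\},
\]
i.e.\ a meet-only analogue of the set used in Theorem~\ref{halo2} — the precise defining conditions need to be engineered so that $T$ is $C^\ast$-closed (guaranteed by being an $\exists$-pp definition over $C^\circ$ by Lemma~\ref{nec}) and so that on the $N_5$ or $M_3$ sublattice one can produce a triple not in $T$, while $T$ still contains ``enough'' tuples (for instance all of $\{0_\L,1_\L\}^3$, or all constant and near-constant tuples) to defeat every nontrivial $C$-equation. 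The heart of the argument: list all nontrivial $3$-variable (or $2$-variable, if a smaller $T\subseteq L^2$ suffices) meet-equations, and for each exhibit a tuple in $T$ violating it, using elements of the $N_5$/$M_3$ copy — this is the analogue of Figure~\ref{fig2} and Table~\ref{tab1}. Once $\Eq_C(T)$ contains only trivial equations we get $T=\Sol(\Eq_C(T))=L^3$, contradicting that the chosen special tuple lies outside $T$; hence $\L$ must have been distributive.

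The main obstacle I anticipate is twofold. First, finding the right $T$: unlike the lattice case, we cannot use $\lor$ inside the defining formula, so the condition distinguishing a ``bad'' tuple has to be expressed purely with $\land$ and existential quantifiers over auxiliary variables — one likely needs to encode ``$x,y,z$ have a common upper bound realizing a prescribed meet pattern'' and then verify directly on $N_5$ and $M_3$ that some triple fails this while the diagonal tuples succeed. Second, the bookkeeping of equations: with $\land$ only, two-variable nontrivial equations are $x=y$, $x\land y=x$, $x\land y=y$ (as in Lemma~\ref{semil1}), but a two-variable $T\subseteq L^2$ may not separate enough, so one may be forced into three variables where the list of nontrivial meet-equations is longer (equalities among $x,y,z,x\land y,x\land z,y\land z,x\land y\land z$), and each must be knocked out by an explicit counterexample drawn from the five-element or five-element-ish sublattice. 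I would handle $N_5$ and $M_3$ as two separate small finite checks, exactly parallel to the table in Lemma~\ref{halo1}, and expect the verification to be routine once the set $T$ is correctly chosen — so the genuine difficulty is the design of $T$, not the case analysis.
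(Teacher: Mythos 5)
Your overall strategy is the same as the paper's (an $\exists$-pp-definable set $T$ over $C^\circ$, automatically $C^\ast$-closed by Lemma~\ref{nec}, plus counterexample tables built from an $N_5$ or $M_3$ sublattice, concluded via the Galois connection of Remark~\ref{soleqt}), but as it stands there is a genuine gap: the set $T$, which you yourself identify as the heart of the matter, is never actually constructed. The candidate you sketch, $\exists u\colon x\land u=y\land u=z\land u=x\land y\land z$, is satisfied by \emph{every} triple (take $u=x\land y\land z$), so without the unspecified extra conditions it is all of $L^3$ and proves nothing; the entire content of the proof is hidden in your ``$\dots$''. The paper's choice is
\[
T=\{(x,y,z)\in L^3 \mid \exists u\colon x\land y=u\land y,\ u\geq x,\ u\geq z\},
\]
and finding such a formula that separates a specific triple of the $N_5$/$M_3$ copy from the rest is precisely the step your proposal defers.

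There is also a structural flaw in your success criterion. You plan to show that $\Eq_C(T)$ is trivial, forcing $\Sol(\Eq_C(T))=L^3$ and contradicting $T\subsetneq L^3$. For the paper's $T$ this is false and, more importantly, unnecessary: the nontrivial equation $y\land z=x\land y\land z$ \emph{does} hold on $T$, and the argument instead exhibits one tuple $(x_1,y_1,z_1)$ in the $N_5$/$M_3$ copy that satisfies this equation (hence lies in $\Sol(\Eq_C(T))$) but is not in $T$; by Remark~\ref{soleqt} the strict inclusion $T\subsetneq\Sol(\Eq_C(T))$ already shows $T$ is not a solution set. Insisting that \emph{all} nontrivial meet-equations be killed is a strictly stronger demand (note that constant tuples, which you mention, kill nothing, and whether a meet-pp-definable proper subset of $L^3$ containing $\{0_\L,1_\L\}^3$ even exists is not addressed), so your plan both omits the key construction and aims at a target that the correct construction does not meet. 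Adjust the scheme to allow a surviving nontrivial equation and supply the explicit $T$ together with the $N_5$ and $M_3$ case checks, and you recover the paper's proof.
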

\begin{proof}
	Let $\L=(L, \land, \lor)$ be a finite lattice and let $C=[\land]\leq \O_L$. Since $\L$ is not distributive, we know that there exists a sublattice of $\L$  isomorphic to either $N_5$ or $M_3$. Let us denote these two cases as ($N_5$) and ($M_3$), respectively. The figures and tables we use in this proof can be found in the Appendix. Let $T$ be the set
	\begin{align*}
		T & = \{(x,y,z) \in L^3 \mid \exists u \in L\colon x \land y = u \land y \text{ and } u \land x = x \text{ and } u \land z = z \} \\
		& =  \{(x,y,z) \in L^3 \mid \exists u \in L\colon x \land y = u \land y \text{ and } u \geq x \text{ and } u \geq z\},
	\end{align*}
	which is closed under $C^\ast$ by Lemma~\ref{nec}. As in Lemma~\ref{halo1}, we will prove that $T$ is not the solution set of any system of equations over $C$. 
	Similarly to Lemma~\ref{halo1}, we present counterexamples to nontrivial equations, the only difference is that here we prove that there can be only one nontrivial equation satisfied by $T$ (see tables~\ref{tabsl1} and \ref{tabsl2} for case ($N_5$) and ($M_3$), respectively).
	We choose the elements $a, b$ and $c$ as presented in Figure~\ref{figsl1} for case ($N_5$), and in Figure~\ref{figsl2} for case ($M_3$). (Note that an element $u$, chosen like on the figures, shows that in case ($N_5$) we have $(a, c, b), (b, a, c) \in T$, and in case ($M_3$) we have $(a, b, c), (a, c, b) \in T$.)

	So now we have that in both cases the only nontrivial equation that $T$ can satisfy is the equation $y \land z = x \land y \land z$. One can verify that this equation holds on $T$: if $(x,y,z) \in T$, then we have 
\[
 x \land y = u \land y \geq z \land y \implies x \land y \land z \geq y \land z,
\]
which implies that $y \land z = x \land y \land z$. Therefore, we can conclude that the only nontrivial equation in $\Eq(T)$ is $y \land z = x \land y \land z$. We will prove that $T$ is not the solution set of any system of equations by presenting a tuple $(x_1, y_1, z_1) \in  \Sol(\Eq(T)) \setminus T$ (cf. Remark~\ref{soleqt}). Since there exists a sublattice of $\L$ isomorphic to $N_5$ or $M_3$, there exists a tuple $(x_1, y_1, z_1)$ as shown in Figure~\ref{figsl3}, which satisfies $y_1 \land z_1 = x_1 \land y_1 \land z_1$, thus $(x_1, y_1, z_1) \in \Sol(\Eq(T))$. However, one can easily verify that $(x_1, y_1, z_1)$ does not belong to $T$. Indeed, suppose that $(x_1, y_1, z_1) \in T$, then there exists $u \in L$ such that $u \geq x_1$, 	$u \geq z_1$ and $x_1 \land y_1 = u \land y_1$. But then we have $u \geq x_1 \lor z_1 > y_1$ (since $N_5$ or $M_3$ is a sublattice), therefore $x_1 \land y_1 < u \land y_1 = y_1$ gives us a contradiction. Thus, $T \neq \Sol(\Eq(T))$, hence, by Remark~\ref{soleqt}, $T$ is not the solution set of any system of equations over $C$.
\end{proof}

Lemma~\ref{semil1} and Theorem~\ref{semil2} prove that if $\M = (M, \land)$ has Property~$\sdc$, then it is the semilattice reduct of a distributive lattice $\L = (L, \land, \lor)$. To complete the characterization of finite semilattices with Property~$\sdc$, we prove that the clone $[\land]$ has Property~$\sdc$ whenever $\land$ is the meet operation of a finite distributive lattice.

\begin{theorem} \label{semil4}
	If $\L=(L, \land, \lor)$ is a finite distributive lattice, then Property~$\sdc$ holds for $C=[\land]$.
\end{theorem}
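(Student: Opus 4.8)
The plan is to verify condition~(iv) of Theorem~\ref{equiv}: every primitive positive formula over $C^\circ$, where $C=[\land]$, is equivalent to a quantifier-free primitive positive formula over $C^\circ$. Exactly as in the proof of Theorem~\ref{halo5}, it suffices to eliminate a single existential quantifier and then iterate, so we start from
\[
\Phi(x_1,\dots,x_n)=\exists u\bigwith_{j=1}^{t}\rho_j\bigl(z_1^{(j)},\dots,z_{r_j}^{(j)}\bigr),\qquad\rho_j\in(C^\circ)^{(r_j)},\ z_i^{(j)}\in\{x_1,\dots,x_n,u\}.
\]

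First I would record the easy analogue of Lemma~\ref{halo3} for the clone $[\land]$: every $[\land]$-term is a meet of variables, so a $[\land]$-equation $\bigwedge_{i\in S}z_i=\bigwedge_{i\in T}z_i$ is equivalent to the conjunction of the inequalities $\bigwedge_{i\in S}z_i\le z_j$ $(j\in T)$ and $\bigwedge_{i\in T}z_i\le z_j$ $(j\in S)$, each of the shape $m\le z$ with $m$ a meet of variables and $z$ a single variable, hence itself a $[\land]$-equation $m=m\land z$. Substituting the variables of $\{x_1,\dots,x_n,u\}$ into the $\rho_j$ and applying this, $\Phi$ becomes $\exists u\bigwith_k(m_k\le z_k)$. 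Any conjunct in which $u$ occurs on both sides is automatically true and may be dropped; grouping the remaining ones according to whether $u$ does not occur ($I_0$), occurs only in the meet on the left ($I_1$), or occurs only as the single variable on the right ($I_2$), I obtain
\[
\Phi\equiv\Bigl(\bigwith_{i\in I_0}(a_i\le b_i)\Bigr)\with\exists u\Bigl(\bigwith_{i\in I_1}(a_i\land u\le b_i)\bigwith_{i\in I_2}(a_i\le u)\Bigr),
\]
where each $a_i$ is a meet of variables from $\{x_1,\dots,x_n\}$ and $b_i\in\{x_1,\dots,x_n\}$; for $i\in I_1$ the meet $a_i$ may be empty (the conjunct then reading $u\le b_i$), and for $i\in I_2$ it is necessarily nonempty.

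The core step is eliminating $\exists u$ from the second bracket. Put $c:=\bigvee_{i\in I_2}a_i$ (with $c=0_\L$ if $I_2=\emptyset$), so that $\{u\in L:a_i\le u\text{ for all }i\in I_2\}=[c,1_\L]$, and observe that $D:=\{u\in L:a_i\land u\le b_i\text{ for all }i\in I_1\}$ is downward closed by monotonicity of $\land$. Since a downward closed set meets $[c,1_\L]$ if and only if it contains $c$, the bracketed formula holds iff $a_i\land c\le b_i$ for every $i\in I_1$. Distributivity of $\L$ now gives $a_i\land c=\bigvee_{i'\in I_2}(a_i\land a_{i'})$, whence $a_i\land c\le b_i$ iff $a_i\land a_{i'}\le b_i$ for all $i'\in I_2$, and $a_i\land a_{i'}$ is again a meet of variables. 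Therefore
\[
\Phi(x_1,\dots,x_n)\equiv\bigwith_{i\in I_0}(a_i\le b_i)\bigwith_{i\in I_1,\ i'\in I_2}(a_i\land a_{i'}\le b_i),
\]
which, writing each $p\le q$ as $p=p\land q$, is a quantifier-free primitive positive formula over $C^\circ$; iterating the argument removes all quantifiers. (Alternatively one may apply Lemma~\ref{helly} to the intervals $[0_\L,t_i]$, where $t_i$ is the largest $u$ with $a_i\land u\le b_i$ — which exists because $D$ is join-closed by distributivity — together with the intervals $[a_i,1_\L]$, $i\in I_2$; the direct argument above just avoids introducing the $t_i$.)

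The only genuinely delicate point is this last rewriting: a priori the quantifier-eliminated condition involves the join $c=\bigvee_{i\in I_2}a_i$, which is not a $[\land]$-term, so it is not obvious that the result lives in the language of $[\land]$-equations. Distributivity is used exactly here, to collapse $a_i\land\bigvee_{i'}a_{i'}$ into a join of meets of variables, each of which is then bounded above by a single variable. Apart from this, one only has to keep track of the degenerate cases $I_1=\emptyset$ or $I_2=\emptyset$ (where $\exists u(\cdots)$ is vacuously true, witnessed by $u=1_\L$ respectively $u=0_\L$, and the corresponding conjunction is empty) and of the empty-meet conjuncts $u\le b_i$ in $I_1$ (for which $a_i\land a_{i'}$ simply reads $a_{i'}$, still a meet of variables); neither causes real difficulty.
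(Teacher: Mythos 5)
Your proposal is correct; the skeleton (condition (iv) of Theorem~\ref{equiv}, reduction to a single existential quantifier, normalization of $[\land]$-equations to inequalities, classification of conjuncts by where $u$ occurs) coincides with the paper's, but the actual elimination step is genuinely different. The paper embeds $\L$ into a Boolean lattice via Birkhoff's representation theorem and uses complements: each constraint on $u$ is turned via Lemma~\ref{halo4} into membership of $u$ in an interval $[c_i,d_i]$, the Helly-type Lemma~\ref{helly} reduces nonemptiness of the intersection to pairwise conditions $c_i\leq d_j$, and Lemma~\ref{halo4} is applied once more to remove the complements from these conditions. You stay entirely inside $\L$: your normalization to conjuncts $m\leq z$ with a single variable on the right makes the ``$u$ on both sides'' case literally trivial (the paper's case (3) needs a small extra reduction), the $I_1$-constraints define a down-set $D$, the $I_2$-constraints the up-set $[c,1_\L]$ with $c=\bigvee_{i\in I_2}a_i$, so a witness exists iff $c\in D$, and distributivity is invoked exactly once, to rewrite $a_i\land c$ as $\bigvee_{i'}(a_i\land a_{i'})$ and hence collapse the condition into pure $[\land]$-inequalities $a_i\land a_{i'}\leq b_i$; your handling of the degenerate cases ($I_1$ or $I_2$ empty, empty meets) is also sound, and only the easy direction of the equivalence is complement-free, so your placement of where distributivity (which must enter somewhere, by Theorem~\ref{semil2}) is used is accurate. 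Both routes produce the same quantifier-free formula; yours is more elementary and self-contained (no Boolean embedding, no complements, no Helly lemma), while the paper's buys uniformity with its proof of Theorem~\ref{halo5}, reusing Lemmas~\ref{halo3}, \ref{halo4} and \ref{helly} for lattices and semilattices alike.
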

\begin{proof}
	Let $\L=(L, \land, \lor)$ be a finite distributive lattice and $C=[\land]\leq \O_L$. Since $\L$ is distributive, by Birkhoff's representation theorem $\L$ can be 			embedded into a Boolean lattice $\B$, hence we may suppose without loss of generality that $\L$ is already a sublattice of $\B$. We can also assume 			that $0_\L=0_\B$ and $1_\L=1_\B$. Let us denote the complement of an element $x \in \B$ by $x'$. 
	By Theorem~\ref{equiv}, Property~$\sdc$ holds for $C$ if and only if any pp. formula over $C^\circ$ is equivalent to a quantifier-free pp. formula. Similarly to the proof of Theorem~\ref{halo5}, it suffices to consider pp. formulas with a single existential quantifier. Let 
	\begin{equation}
		\Phi(  x_{1},\dots,x_{n})  =\exists u\bigwith_{j=1}^{t}\rho_{j}\bigl(z_{1}^{(  j)  },\dots,z_{r_{j}}^{(  j)  }\bigr) 				\label{eqpp2},
	\end{equation}
	where $\rho_j \in {(C^\circ})^{(r_j)}$, and $z_i^{(j)} \ (j = 1, \dots, t, \text{ and } i=1, \dots, r_j)$ are variables from the set $\{x_1, \dots, x_n, u\}$. We will show that $\Phi$ is equivalent to a quantifier-free pp. formula.	
	
	Since for all $a, b \in L$ we have $a=b$ if and only if $a \leq b$ and $b \leq a$, we can rewrite $\Phi$ to an equivalent formula
\[
\exists u \bigwith_{i=1}^l (p_i \leq q_i),
\]
	where $p_i, q_i \in [\land]$ ($i=1, \dots, l$).
	Let $a_i$ denote the meet of all variables from $\{x_1, \dots, x_n\}$ appearing in $p_i$, and let $b_i$ denote the meet of all variables from $\{x_1, 			\dots, x_n\}$ appearing in $q_i$. Then we can distinguish four cases for the $i$-th inequality:
	\begin{enumerate}
	\setcounter{enumi}{-1}
		\item If $u$ does not appear in the inequality, then the inequality is of the form $a_i \leq b_i$.
		\item If $u$ appears only on the left hand side of the inequality, then the inequality is of the form $a_i \land u \leq b_i$.
		\item If $u$ appears only on the right hand side of the inequality, then the inequality is of the form $a_i \leq b_i \land u$, which holds if and only if 					$a_i \leq b_i$ and $a_i \leq u$.
		\item If $u$ appears on both sides of the inequality, then the inequality is of the form $a_i \land u \leq b_i \land u$, which holds if and only if $a_i 					\land u \leq b_i$ and $a_i \land u \leq u$, that is, $a_i \land u \leq b_i$.
	\end{enumerate}
	Let $I_j$ denote the following set of indices: 
\[
I_j =\{i \mid \text{the inequality } p_i \leq q_i \text{ belongs to case (j)}\}
\]
	for $j=0, 1, 2, 3$. We investigate only cases (1), (2) and (3), since $u$ does not appear in case (0). Moreover; in case (2), we only have to deal with the inequality $a_i \leq u$, since $u$ does not appear in the inequality $a_i \leq b_i$.
	By Lemma~\ref{halo4},
	\begin{align*}
		&  \text{for } i \in I_1 \text{ we have } a_i \land u \leq b_i \iff u \leq a_i' \lor b_i \iff u \in [0_\L, a_i' \lor b_i] =: [c_i, d_i]; \\
		&  \text{for } i \in I_2 \text{ we have } a_i \leq u \iff u \in [a_i, 1_\L]=:[c_i, d_i]; \\
		& \text{for } i \in I_3 \text{ we have } a_i \land u \leq b_i \iff u \leq a_i' \lor b_i \iff u \in [0_\L, a_i' \lor b_i] =: [c_i, d_i].
	\end{align*}
	Then we have 
\[
\bigcap_{i \in I_1 \cup I_2 \cup I_3} [c_i, d_i] \neq \emptyset \iff \forall i, j \in I_1 \cup I_2 \cup I_3\colon c_i \leq d_j
\]
	 by Lemma~\ref{helly}. Just as in the proof of Theorem~\ref{halo5}, we apply Lemma~\ref{halo4} to eliminate complements and joins from the formula above. The only interesting case is if $c_i \neq 0_\L$ and $d_j \neq 1_\L$, that is, $c_i = a_i$ and $d_j= a_j' \lor b_j$ ($i \in I_2, j \in I_1 \cup I_3$). In this case $c_i 			\leq d_j$ if and only if $a_i \leq a_j' \lor b_j$, which holds if and only if $a_i \land a_j \leq b_j$ by Lemma~\ref{halo4} (with $u=a_i$).
	
	Summarizing the observations above, we have
	\begin{align*}
		\Phi(x_1, \dots, x_n) & \equiv \exists u \bigwith_{i=1}^l (p_i \leq q_i) \equiv 
		\bigwith_{i \in I_0 \cup I_2}(a_i \leq b_i) \bigwith_{i,j \in I_1 \cup I_2 \cup I_3}(c_i \leq d_j) \\
		& \equiv \bigwith_{i \in I_0 \cup I_2}(a_i \leq b_i) \bigwith_{i \in I_2, j \in I_1 \cup I_3}(a_i \land a_j \leq  b_j),
	\end{align*}
	which is equivalent to a quantifier-free pp. formula over $[\land]$ (since for all $x,y \in L$, we have $x \leq y$ if and only if $x = x \land y$).
\end{proof}

We can summarize the results of this section in the following theorem, which is a corollary of Lemma~\ref{semil1}, and theorems \ref{semil2} and \ref{semil4}.

\begin{theorem} \label{fofelhalo}
	A finite semilattice has Property~$\sdc$ if and only if it is distributive.
\end{theorem}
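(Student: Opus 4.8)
The plan is to assemble the final equivalence from Lemma~\ref{semil1}, Theorem~\ref{semil2} and Theorem~\ref{semil4}, using Remark~\ref{Gratzer} to translate between the intrinsic notion of a \emph{distributive semilattice} and being the \emph{semilattice reduct of a distributive lattice}. Throughout, let $\M=(M,\land)$ be a finite semilattice and $C=[\land]$ its clone of term operations.

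For the ``if'' direction, suppose $\M$ is distributive. By Remark~\ref{Gratzer}, $\M$ is the semilattice reduct of some finite distributive lattice $\L=(L,\land,\lor)$ with $L=M$; since the clone generated by $\land$ depends only on the meet operation, $C=[\land]$ is the same whether we view it over $\M$ or over $\L$. Theorem~\ref{semil4} then gives that Property~$\sdc$ holds for $C$, i.e., $\M$ has Property~$\sdc$.

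For the ``only if'' direction I would argue contrapositively: assume $\M$ is not distributive, and deduce that Property~$\sdc$ fails for $C$. If $\M$ has no greatest element, Lemma~\ref{semil1} immediately gives the conclusion. So we may assume $\M$ has a greatest element; as observed in the paragraph preceding Theorem~\ref{semil2}, finiteness together with the existence of a greatest element lets us define a join operation $\lor$ on $M$ making $\L=(M,\land,\lor)$ a lattice whose meet (and hence whose clone $C=[\land]$) is unchanged. Since $\M$ is not distributive, by Remark~\ref{Gratzer} this lattice $\L$ cannot be distributive either (were it distributive, $\M$ would be a semilattice reduct of a distributive lattice, hence distributive). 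Theorem~\ref{semil2} then yields that Property~$\sdc$ fails for $C$, completing the contrapositive and thus the proof.

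The argument involves no genuine obstacle beyond bookkeeping, but two points deserve care. First, one must be explicit that the clone $C=[\land]$ is insensitive to whether we work in $\M$ or in a lattice completion of it, since this is exactly what lets Theorems~\ref{semil2} and \ref{semil4} (stated for lattices) be applied to the semilattice $\M$. Second, the equivalence in Remark~\ref{Gratzer} between finite distributive semilattices and semilattice reducts of distributive lattices must be invoked in both directions, in order to connect the distributivity of $\M$ with the distributivity of the ambient lattice used in the lattice-level results.
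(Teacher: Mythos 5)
Your proof is correct and takes essentially the same route as the paper, which derives Theorem~\ref{fofelhalo} as a direct corollary of Lemma~\ref{semil1}, Theorem~\ref{semil2} and Theorem~\ref{semil4}, with Remark~\ref{Gratzer} translating between distributive semilattices and semilattice reducts of distributive lattices. Your two points of care (invariance of the clone $[\land]$ under passing to the lattice, and using Remark~\ref{Gratzer} in both directions) are exactly the bookkeeping the paper leaves implicit.
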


	This means that for any finite semilattice $\M$, solution sets of systems of equations over $\M$ can be characterized (via closure conditions) if and only if $\M$ is a semilattice reduct of a distributive lattice (see Remark~\ref{Gratzer}).

\section{Concluding remarks}

We have characterized finite lattices and semilattices having Property~$\sdc$.
As a natural continuation of these investigations, one could aim at describing
all finite algebras (clones over finite sets) with Property~$\sdc$. 

Primitive positive clones seem to be of particular interest, for the following reason. For a primitive positive clone $P\leq\mathcal{O}_{A}$, let us consider the set $C(  P )  =\{  C\leq\mathcal{O}_{A}\colon C^{\ast}=P \}  $. The greatest element of this set is $P^{\ast}$, since
$C^{\ast}=P$ implies that $C\subseteq C^{\ast\ast}=P^{\ast}$ and $P^{\ast}\in C (  P )  $ follows from $P^{\ast\ast}=P$. If a clone $C\in C(P)$ has Property~$\sdc$, then every set $T\subseteq A^{n}$ that is
closed under $C^{\ast}=P$ arises as the solution set of a system $\mathcal{E}$
of equations over $C$. Since $C\subseteq P^{\ast}$, we can regard
$\mathcal{E}$ as a system of equations over $P^{\ast}$. Therefore, every set
$T\subseteq A^{n}$ that is closed under $(  P^{\ast})  ^{\ast}=P$
arises as the solution set of a system of equations over $P^{\ast}$, i.e.,
$P^{\ast}$ has Property~$\sdc$. Thus if $P^{\ast}$ does not satisfy
Property~$\sdc$, then no clone in $C(  P)  $ can have
Property~$\sdc$. In other words, primitive positive clones have the
\textquotedblleft highest chance\textquotedblright\ for having Property~$\sdc$.

Another topic worth further study is the relationship with ho\-mo\-mor\-phism-ho\-mo\-ge\-ne\-ity. It was proved in \cite{MasPec} that
ho\-mo\-mor\-phism-ho\-mo\-ge\-ne\-ity is equivalent to a certain quantifier elimination property (but somewhat different from Theorem~\ref{equiv}). Also, our results together with \cite{DolMas} imply that all finite lattices and semilattices with Property~$\sdc$ are ho\-mo\-mor\-phism-ho\-mo\-ge\-neous, so it might be plausible that Property~$\sdc$ implies ho\-mo\-mor\-phism-ho\-mo\-ge\-neity in general for finite algebras.

\section*{Acknowledgments}
The authors are grateful to Mikl\'{o}s Mar\'{o}ti, Dragan Ma\v{s}ulovi\'{c} and L\'aszl\'o Z\'adori for helpful discussions.

Research partially supported by the Hungarian Research, Development and Innovation Office under grants KH126581 and K128042 and by the Ministry of Human Capacities, Hungary grant 20391-3/2018/FEKUSTRAT.

Open Access. FundRef: University of Szeged Open Access Fund, Grant number: 4466.

\appendix
\section*{Appendix: figures and tables for the proof of Theorem~\ref{semil2}}
\begin{figure}[H]
		\begin{subfigure}{0.35\textwidth}
			\includegraphics[width=0.6\linewidth, center]{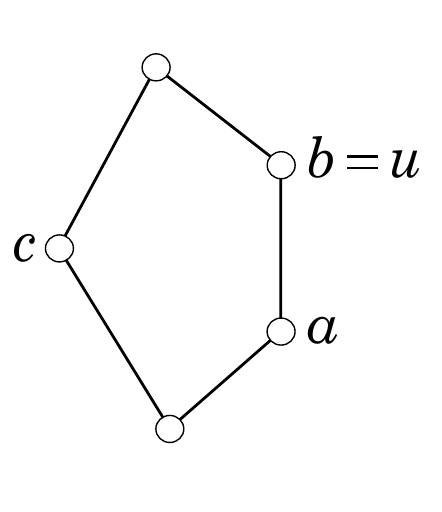}
		\end{subfigure}
		\begin{subfigure}{0.35\textwidth}
			\includegraphics[width=0.5\linewidth, center]{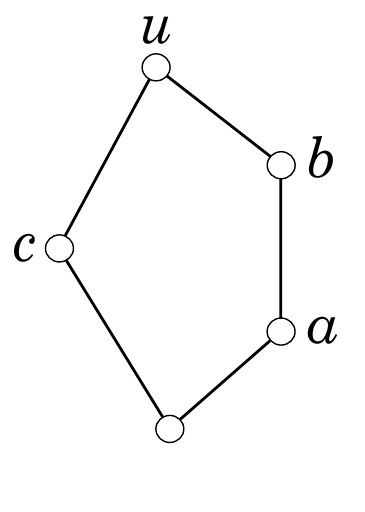}
		\end{subfigure}
		\caption{The elements $a, b$ and $c$ (with an example $u$ proving $(a, c, b), (b, a, c) \in T$) in case ($N_5$).} \label{figsl1}
\end{figure}

\begin{figure}[H]
		\begin{subfigure}{0.35\textwidth}
			\includegraphics[width=0.5\linewidth, center]{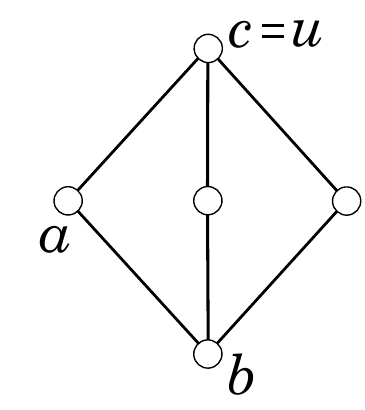}
		\end{subfigure}
		\begin{subfigure}{0.35\textwidth}
			\includegraphics[width=0.51\linewidth, center]{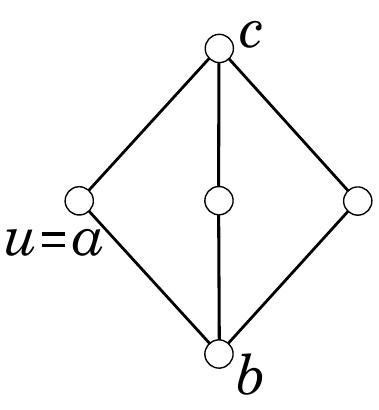}
		\end{subfigure}
		\caption{The elements $a, b$ and $c$ (with an example $u$ proving $(a, b, c), (a, c, b) \in T$) in case ($M_3$).} \label{figsl2}
\end{figure}

\begin{table}[H]
\[
\renewcommand{\arraystretch}{1.3}
\begin{tabular}
[c]{c|ccccccc}
$=$ & $x$ & $y$ & $z$ & $x \land y$ & $x \land z$ & $y \land z$ & $x \land y \land z$ \\\hline
$x$ & $-$ & $(a,c,b)$ & $(a,c,b)$ & $(a,c,b)$ & $(b,a,c)$ & $(a,c,b)$ & $(a,c,b)$\\
$y$ & $-$ & $-$ & $(a,c,b)$ & $(a,c,b)$ & $(a,c,b)$ & $(a,c,b)$ & $(a,c,b)$\\
$z$ & $-$ & $-$ & $-$ & $(a,c,b)$ & $(a,c,b)$ & $(a,c,b)$ & $(a,c,b)$\\
$x \land y$ & $-$ & $-$ & $-$ & $-$ & $(a,c,b)$ & $(b,a,c)$ & $(b,a,c)$\\
$x \land z$ & $-$ & $-$ & $-$ & $-$ & $-$ & $(a,c,b)$ & $(a,c,b)$\\
$y \land z$ & $-$ & $-$ & $-$ & $-$ & $-$ & $-$ & $ $\\
$x \land y \land z$ & $-$ & $-$ & $-$ & $-$ & $-$ & $-$ & $-$
\end{tabular}
\]
\caption{Counterexamples for case ($N_5$) showing that these equations do not belong to $\Eq(T)$.}  \label{tabsl1}
\end{table}

\begin{table}[H]
	\[
\renewcommand{\arraystretch}{1.3}
\begin{tabular}
[c]{c|ccccccc}
$=$ & $x$ & $y$ & $z$ & $x \land y$ & $x \land z$ & $y \land z$ & $x \land y \land z$ \\\hline
$x$ & $-$ & $(a,b,c)$ & $(a,b,c)$ & $(a,b,c)$ & $(a,c,b)$ & $(a,b,c)$ & $(a,b,c)$\\
$y$ & $-$ & $-$ & $(a,b,c)$ & $(a,c,b)$ & $(a,b,c)$ & $(a,c,b)$ & $(a,c,b)$\\
$z$ & $-$ & $-$ & $-$ & $(a,b,c)$ & $(a,b,c)$ & $(a,b,c)$ & $(a,b,c)$\\
$x \land y$ & $-$ & $-$ & $-$ & $-$ & $(a,b,c)$ & $(a,c,b)$ & $(a,c,b)$\\
$x \land z$ & $-$ & $-$ & $-$ & $-$ & $-$ & $(a,b,c)$ & $(a,b,c)$\\
$y \land z$ & $-$ & $-$ & $-$ & $-$ & $-$ & $-$ & $ $\\
$x \land y \land z$ & $-$ & $-$ & $-$ & $-$ & $-$ & $-$ & $-$
\end{tabular}
\]
\caption{Counterexamples for case ($M_3$) showing that these equations do not belong to $\Eq(T)$.}  \label{tabsl2}
\end{table}

\begin{figure}[H]
	\begin{subfigure}{0.35\textwidth}
		\includegraphics[width=0.5\textwidth, center]{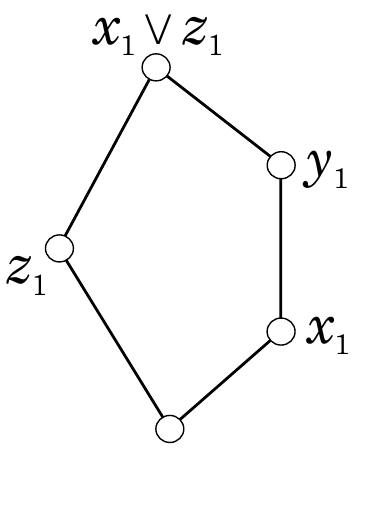}
	\end{subfigure}
	\begin{subfigure}{0.35\textwidth}
		\includegraphics[width=0.5\textwidth, center]{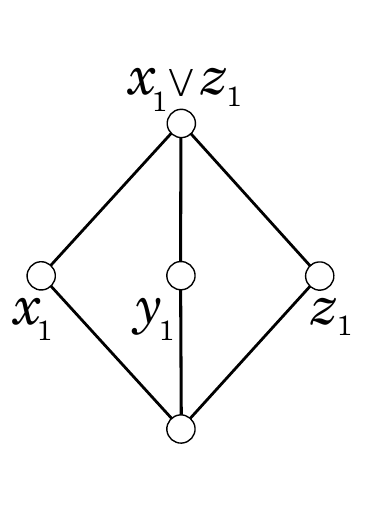}
	\end{subfigure}
		\caption{$(x_1, y_1, z_1)$ satisfies all equations in $\Eq(T)$, but $(x_1, y_1, z_1) \notin T$.}  \label{figsl3}
\end{figure}

\end{document}